\def\Q{{\mathbb Q}}
\def\Z{{\mathbb Z}}
\def\C{{\mathbb C}}
\def\R{{\mathbb R}}
\def\F{{\mathbb F}}
\def\Gal{\mathrm{Gal}}
\def\ord{\mathrm{ord}}
\def\UU{\mathrm{U}}
\def\Fr{\mathrm{Fr}}
\def\End{\mathrm{End}}
\def\cl{\mathrm{cl}}
\def\OO{{\mathcal O}}
\def\UU{\mathrm{U}}
\def\is{\mathrm{isg}}
\def\HH{\mathbf{H}}
        \def\K_a{\bar{K}}
\def\dim{\mathrm{dim}}
\newtheorem{thm}{Theorem}[section]
\newtheorem{lem}[thm]{Lemma}
\newtheorem{cor}[thm]{Corollary}
\newtheorem{prop}[thm]{Proposition}
\theoremstyle{definition}
\newtheorem{defn}[thm]{Definition}
\newtheorem{ex}[thm]{Example}
\newtheorem{rem}[thm]{Remark}
\newtheorem{sect}[thm]{}
\title[Isogeny classes of abelian varieties ]{Isogeny classes and Endomorphisms algebras of abelian varieties over finite fields}
\author{Yuri G. Zarhin}
\address{Department of Mathematics, Pennsylvania
State University, University Park, PA 16802, USA}
\email{zarhin\char`\@math.psu.edu}
\begin{document}
\begin{abstract}
We construct non-isogenous simple ordinary abelian varieties over an algebraic closure of a finite field  with isomorphic endomorphism algebras.

\end{abstract}

\thanks{The author  was partially supported by Simons Foundation Collaboration grant   \# 585711.
Part of this work was done during his stay in January - May 2022 at the Max-Planck Institut f\"ur Mathematik (Bonn, Germany),
whose hospitality and support are gratefully acknowledged.}

                  \subjclass[2010]{11G10, 11G25, 14G15}
\maketitle

\section{Introduction}
\begin{sect}
\label{zero}
If $K$ is a number field then we write $\mathrm{Cl}(K)$ for the (finite commutative) ideal class group of $K$, 
$\cl(K)$ for the class number  of $K$ (i.e., the cardinality of  $\mathrm{Cl}(K)$) and $\exp(K)$ for the exponent of $\mathrm{Cl}(K)$. 
 Clearly,  $\exp(K)$ divides  $\cl(K)$. (The equality holds if and only if $\mathrm{Cl}(K)$  is cyclic, which is not always the case, see \cite[Tables]{BS}.)
 In addition, 
$\exp(K)$ is odd if and only if   $\cl(K)$ is odd. We write $\OO_K$ for the ring of integers in $K$ and $\UU_K$ for the group of {\sl units}, 
i.e., the multiplicative group of invertible elements in $\OO_K$.  As usual, an element of $\UU_K$  is called a unit in $K$ or a $K$-unit. It is well known (and can be easily checked) that if a unit $u$ in $K$ is a square in $K$ then it is also a square in $\UU_K$.

Let $p$ be a prime and  $q$ a positive integer that is a power of $p$.
We write $\F_p$ for the $p$-element finite field and $\F_q$ for its $q$-element overfield. As usual, $\bar{\F}_p$ stands for an algebraic closure of $\F_p$,
which is also an algebraic closure of $\F_q$.  We have
$$\F_p\subset \F_q\subset \bar{\F}_p.$$
If $X$ is an abelian variety over $\bar{\F}_p$ then we write $\End^{0}(X)$ for its endomorphism algebra 
$\End(X)\otimes\Q$, which is a finite-dimensional semisimple algebra over the field $\Q$ of rational numbers.
If $X$ is defined over $k=\F_q$ then we write $\End_k(X)$ for its ring of $k$-endomorphisms and
$\End_k^{0}(X)$ for the $\Q$-algebra 
$\End_k(X)\otimes\Q$; one may view $\End_k^{0}(X)$ as the $\Q$-subalgebra of $\End^{0}(X)$ with the same $1$.

It is well known that isogenous abelian varieties have isomorphic endomorphism algebras and the same dimension (and $p$-adic Newton polygon).
In addition, an abelian variety is simple if and only if its endomorphism algebra is a division algebra over $\Q$.
It is also known (Grothendieck-Tate) that $\End^{0}(X)$  uniquely determines the dimension of $X$ \cite{O}.
Namely, $2\dim(X)$ is the maximal $\Q$-dimension of a semisimple commutative $\Q$-subalgebra of $\End^0(X)$. However,
it turns out that there are non-isogenous abelian varieties over $\bar{\F}_p$ with isomorphic endomorphism algebras.
\end{sect}
The aim of this note is to provide explicit examples of such varieties.

Let me start with a classical result of   M. Deuring  about elliptic curves  \cite{Deuring},
\cite[Ch. 4]{Waterhouse}.  
 
\begin{prop}
\label{elliptic}
Let $K$ be an imaginary quadratic field.

\begin{itemize}
\item[(i)]
Let $p$ be a prime and $E$ an elliptic curve over $\bar{\F}_p$ such that
$\End^0(E)$ is isomorphic to $K$.

Then $p$ splits  in $K$ and $E$ is ordinary.
\item[(ii)]
Let $p$ be a prime that splits in $K$. 

Then 
all the elliptic curves $E$ over 
$\bar{\F}_p$ with $\End^0(E)\cong K$ are mutually isogenous.
\end{itemize}
\end{prop}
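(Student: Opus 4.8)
The plan is to treat the two statements separately, using the Honda--Tate dictionary between elliptic curves over finite fields and their Frobenius Weil numbers, together with the elementary ideal-theoretic bookkeeping that the splitting behaviour of $p$ in $K$ imposes on such numbers.

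For (i) I would first recall the standard dichotomy for an elliptic curve $E$ over $\bar{\F}_p$: either $E$ is ordinary, in which case $\End^0(E)$ is an imaginary quadratic field, or $E$ is supersingular, in which case $\End^0(E)$ is the quaternion algebra over $\Q$ ramified exactly at $p$ and $\infty$, of $\Q$-dimension $4$. Since we assume $\End^0(E)\cong K$, a field of $\Q$-dimension $2$, the supersingular case is impossible and $E$ is ordinary. To see that $p$ splits, fix a finite field $\F_q$, $q=p^n$, over which $E$ and all its endomorphisms are defined, and let $\pi\in\End_{\F_q}(E)\subset\End^0(E)=K$ be the Frobenius; ordinariness means its trace $t=\pi+\bar\pi\in\Z$ is prime to $p$, while $\pi\bar\pi=q=p^n$. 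If $p$ did not split there would be a single prime $\mathfrak{P}$ of $K$ above $p$, fixed by complex conjugation; comparing $\mathfrak{P}$-adic valuations in the factorisation $(\pi)(\bar\pi)=(p^n)$ forces $v_{\mathfrak{P}}(\pi)=v_{\mathfrak{P}}(\bar\pi)$, whence $v_{\mathfrak{P}}(t)\ge v_{\mathfrak{P}}(\pi)$, and a short computation yields $p\mid t$, a contradiction. Hence $p$ splits.

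For (ii) the point is that over $\bar{\F}_p$ one is free to enlarge the base field, and this freedom annihilates the ambiguity of Frobenius up to roots of unity. Given two curves $E_1,E_2$ over $\bar{\F}_p$ with $\End^0(E_i)\cong K$, I would descend them, with all their endomorphisms, to a common $\F_q$, $q=p^n$, with Frobenius elements $\pi_1,\pi_2\in\OO_K$ satisfying $\pi_i\bar\pi_i=q$. By (i) both curves are ordinary, and the same trace argument shows that each $(\pi_i)$ is a power of a single prime above $p$: writing $p\OO_K=\mathfrak{p}\bar{\mathfrak{p}}$, ordinariness forces $(\pi_i)\in\{\mathfrak{p}^n,\bar{\mathfrak{p}}^n\}$. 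Using the freedom in the isomorphism $\End^0(E_2)\cong K$ (there are two, interchanged by complex conjugation) I would normalise so that $(\pi_1)=(\pi_2)=\mathfrak{p}^n$. Then $\zeta:=\pi_1/\pi_2$ is a unit of $\OO_K$, hence a root of unity of some finite order $m$; passing to $\F_{q^m}$ replaces $\pi_i$ by $\pi_i^m$, and $\pi_1^m=\zeta^m\pi_2^m=\pi_2^m$. Thus $E_1$ and $E_2$ acquire equal Frobenius over $\F_{q^m}$, and Tate's isogeny theorem produces an $\F_{q^m}$-isogeny between them; a fortiori they are isogenous over $\bar{\F}_p$.

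The main obstacle I anticipate is not any single hard estimate but the bookkeeping in (ii): one must verify that the two Frobenii can be attached to the \emph{same} prime $\mathfrak{p}$ before comparing them, since the identification $\End^0(E_i)\cong K$ is canonical only up to complex conjugation, and only after this normalisation does $\pi_1/\pi_2$ become a root of unity that a base change can clear. The conceptual heart is the observation that enlarging the finite field of definition corresponds to raising the Weil number to a power, so that the finitely many units of the imaginary quadratic field $K$ present no obstruction once one works over the algebraic closure.
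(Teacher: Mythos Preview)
Your argument is correct. The paper does not give a standalone proof of this proposition: it is stated as a classical result of Deuring, and later (in the remark following Theorem~\ref{main}) observed to be the special case $n=1$ of the main theorem. Specialising the paper's machinery to $n=1$ yields exactly your picture: $H(p)=\{\mathfrak{p},\bar{\mathfrak{p}}\}$ is a single $G$-orbit under $G=\{1,\rho\}$, so there is one equivalence class of ordinary Weil numbers, hence one isogeny class over $\bar{\F}_p$; and the paper's Lemmas~\ref{nonsplit}--\ref{PsplitK} and Theorem~\ref{ordinaryAV} play the role of your splitting/ordinariness argument in (i).

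The difference is one of packaging rather than substance. You use the elliptic-curve dichotomy (quaternion versus imaginary quadratic) to get ordinariness immediately, then the trace criterion $p\nmid t$ to force $p$ to split; the paper's general argument runs through Weil numbers and $p$-types and only at the end reads off commutativity of $\End_k^0$. Your route is shorter and more transparent in dimension one, at the cost of not generalising; the paper's route is uniform in $n$. The one point worth making explicit in your write-up of (ii) is the one you already flag as the ``main obstacle'': the identification $\End^0(E_i)\cong K$ is only canonical up to $\Gal(K/\Q)=\{1,\rho\}$, and it is precisely this ambiguity that lets you arrange $(\pi_1)=(\pi_2)$. In the paper's language this is the statement that $\mathfrak{p}$ and $\bar{\mathfrak{p}}$ lie in the same $G$-orbit of $H(p)$.
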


I did not find  in the literature the following assertion  that complements Proposition \ref{elliptic}.

\begin{prop}
\label{ellipticBis}
Let $K$ be an imaginary quadratic field and $p$
 a prime that splits in $K$. Let us put $q=p^{\exp(K)}$.
 
Then there exists an elliptic curve $E$ that is defined with all its endomorphisms
over $\F_{q}$ and such that $\End^0(E) \cong K$.
\end{prop}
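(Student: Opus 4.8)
The plan is to realize the desired curve through Honda--Tate theory by writing down an explicit ordinary Weil $q$-number generating $K$, and to control the field of definition by means of the class field theory of $K$. Since $p$ splits in $K$, write $(p)=\mathfrak{p}\bar{\mathfrak{p}}$ with $\mathfrak{p}\neq\bar{\mathfrak{p}}$; the residue field of $\OO_K$ at $\mathfrak{p}$ is then $\F_p$. By the very definition of the exponent, $[\mathfrak{p}]^{\exp(K)}$ is the trivial class in $\mathrm{Cl}(K)$, so the ideal $\mathfrak{p}^{\exp(K)}$ is principal, say $\mathfrak{p}^{\exp(K)}=(\Pi)$ with $\Pi\in\OO_K$. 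Taking norms gives $\Pi\bar{\Pi}=N(\mathfrak{p}^{\exp(K)})=p^{\exp(K)}=q$, so $\Pi$ is a Weil $q$-number. Since $\mathfrak{p}\neq\bar{\mathfrak{p}}$ we have $(\Pi)\neq(\bar{\Pi})$, hence $\Pi\notin\Q$ and $\Q(\Pi)=K$; moreover $\ord_{\bar{\mathfrak{p}}}(\Pi)=0$, so $\Pi$ is \emph{ordinary}.

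I would then feed $\Pi$ into Honda--Tate theory, which attaches to it a simple abelian variety $E$ over $\F_q$, unique up to isogeny, whose Frobenius endomorphism is $\Pi$ and with $\End^0(E)$ a central division algebra over $\Q(\Pi)=K$. The main point to check is that $E$ is in fact an \emph{elliptic curve}, i.e.\ that this division algebra is just $K$. This is exactly where splitting (equivalently, ordinariness) is used: the local invariants of $\End^0(E)$ vanish at both places of $K$ above $p$ --- the invariant at $\mathfrak{p}$ is $\ord_{\mathfrak p}(\Pi)/\ord_{\mathfrak p}(q)=\exp(K)/\exp(K)\equiv 0$ and the invariant at $\bar{\mathfrak p}$ is $0$ --- while they vanish at all other places as well, so $\End^0(E)=K$ and $\dim E=\tfrac{1}{2}[K:\Q]=1$. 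Thus $E$ is an ordinary elliptic curve over $\F_q$ with $\End^0(E)\cong K$.

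It remains to see that all of the endomorphisms of $E$ are already defined over $\F_q$. For an ordinary elliptic curve this is automatic: any geometric endomorphism $\phi$ lies in the commutative algebra $\End^0_{\bar{\F}_p}(E)=K$, the action of the $q$-power Frobenius on $\phi$ is conjugation by $\Pi\in K^\times$, and this conjugation is trivial because $K$ is commutative; hence $\phi$ is $\F_q$-rational and $\End_{\F_q}(E)=\End_{\bar{\F}_p}(E)$. The hard part of the argument is the dimension count in the middle step --- verifying that the Honda--Tate division algebra collapses to $K$ --- everything else being a combination of the class field theory of $K$ and the elementary Galois-conjugation fact for ordinary curves. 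As an alternative, one could instead start from an elliptic curve with CM by $\OO_K$ over the Hilbert class field $H$ of $K$ and reduce it modulo a prime of $H$ above $\mathfrak{p}$, using that the residue degree of such a prime equals $\ord([\mathfrak{p}])$, a divisor of $\exp(K)$; Deuring's reduction theory then yields an ordinary curve over $\F_{p^{\ord([\mathfrak p])}}\subseteq\F_q$ carrying all its endomorphisms.
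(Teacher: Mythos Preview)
Your argument is correct and parallels the paper's, which obtains this proposition as the $n=1$ instance of its main theorem: one takes a generator $\Pi$ of the principal ideal $\mathfrak{p}^{\exp(K)}$, checks it is an ordinary Weil $q$-number with $\Q(\Pi)=K$ (for imaginary quadratic $K$ the ratio $\Pi\bar\Pi/q$ is a totally positive unit in $K_0=\Q$, hence automatically $1$, so the unit adjustment in the general construction is vacuous), and applies Honda--Tate together with the computation of local invariants to get an ordinary elliptic curve over $\F_q$ with $\End^0_{\F_q}(E)\cong K$.

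The one substantive difference is in the last step, showing that all endomorphisms descend to $\F_q$. The paper verifies that $\Q(\Pi^h)=K$ for every $h\ge 1$ and deduces via its Lemma~\ref{HTlemma} that $\End_k^0(E)=K$ for every finite extension $k/\F_q$, so nothing new appears over $\bar{\F}_p$. You instead use the shortcut specific to ordinary elliptic curves: the geometric endomorphism algebra is an imaginary quadratic field, hence commutative, and since $\End_{\F_q}(E)$ is the centralizer of Frobenius in $\End_{\bar{\F}_p}(E)$, the two coincide. Your route is more economical in dimension one; the paper's route (via $\Q(\Pi^h)=K$) is the one that generalizes to the higher-$n$ case, where one cannot appeal a priori to commutativity of the geometric endomorphism algebra.
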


\begin{rem}
One may deduce from (\cite[Satz 3]{Deuring2}, \cite[Sect. 6, Cor. 1 on p. 507]{ST}) 
that if we put $q_1=p^{\cl(K)}$ then  there exists an elliptic curve $E$ that is defined with all its endomorphisms
over $\F_{q_1}$ and such that $\End(E) \cong \OO_K$ (and therefore $\End^0(E) \cong K$).

\end{rem}

The next result is an analogue of Proposition \ref{elliptic} for abelian surfaces and quartic fields.

\begin{prop}
\label{surface}
Let $K$ be a CM quartic field that is a cyclic extension of $\Q$.

\begin{itemize}
\item[(i)]
Let $p$ be a prime and $Y$ an abelian surface over $\bar{\F}_p$ such that
$\End^0(Y)$ is isomorphic to $K$.

Then $p$ splits completely in $K$ and $Y$ is simple ordinary.
\item[(ii)]
Let $p$ be a prime that splits in $K$. 

Then all the abelian surfaces $Y$ over 
$\bar{\F}_p$ with $\End^0(Y)\cong K$ are mutually isogenous.
In addition, there exists such an $Y$ that is defined with all its endomorphisms
over $\F_{p^{2c}}$ where $c=\exp(K)$.
\end{itemize}
\end{prop}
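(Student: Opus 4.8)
The plan is to recast everything through Honda--Tate theory and a study of Weil numbers inside $K$, exploiting that $\Gal(K/\Q)\cong\Z/4\Z$ is cyclic with complex conjugation $c$ as its unique element of order $2$.

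For part (i) I would first note that $\End^0(Y)\cong K$ is a field, so $Y$ is simple, and since $2\dim(Y)=[K:\Q]=4$ the surface has CM by $K$. Choosing a finite field $\F_q$ over which $Y$ and all its endomorphisms are defined gives $\End^0_{\F_q}(Y)=K$; being commutative, this algebra equals its own centre $\Q(\pi)$, where $\pi$ is the Weil $q$-number attached to $Y$, so $K=\Q(\pi)$. Vanishing of all local invariants of the field $K$ reads $\tfrac{\ord_v(\pi)}{\ord_v(q)}\,[K_v:\Q_p]\in\Z$ for every $v\mid p$, so the Newton slopes of $Y$ are the $\ord_v(\pi)/\ord_v(q)$ with multiplicities $[K_v:\Q_p]$. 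The key point is that in $\Z/4\Z$ every nontrivial subgroup contains $c$; hence whenever a decomposition group $D_v$ is nontrivial one has $c\in D_v$, so $c$ fixes $v$, and $\pi\,c(\pi)=q$ forces $\ord_v(\pi)=\tfrac12\ord_v(q)$, i.e. slope $1/2$. Thus, unless $p$ splits completely, all slopes equal $1/2$ and $Y$ is supersingular; but a supersingular abelian surface over $\bar{\F}_p$ is isogenous to $E^2$ with $\End^0\cong\Mat_2(B_{p,\infty})$ of $\Q$-dimension $16$, contradicting $\dim_\Q K=4$. Hence $p$ splits completely, each $[K_v:\Q_p]=1$, the slopes lie in $\{0,1\}$, and $\pi\,c(\pi)=q$ pairs them as $\{0,0,1,1\}$, so $Y$ is ordinary.

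For the mutual-isogeny statement in (ii) I would use that two simple abelian varieties over $\bar{\F}_p$ are isogenous iff, over a common finite field and after replacing the Frobenii by suitable powers, their Weil numbers become $\Gal$-conjugate. Writing $p\OO_K=\mathfrak p_0\mathfrak p_1\mathfrak p_2\mathfrak p_3$ with $\Gal$ acting simply transitively and $c=\sigma^2$ (conjugate pairs $\{\mathfrak p_0,\mathfrak p_2\}$, $\{\mathfrak p_1,\mathfrak p_3\}$), part (i) forces $(\pi)=\mathfrak a_\Phi^{\,n}$ for a CM type $\Phi$ (one prime from each pair), where $q=p^{n}$ and $\mathfrak a_\Phi=\prod_{\mathfrak p\in\Phi}\mathfrak p$. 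The decisive use of cyclicity is that the generator $\sigma$ permutes the four CM types in a single $4$-cycle, so any two are $\Gal$-conjugate. Given $Y_1,Y_2$ with types $\Phi,\Phi'$, I pick $\tau$ with $\tau\Phi=\Phi'$; then $(\tau\pi_1)^{n_2}$ and $\pi_2^{n_1}$ generate the same ideal $\mathfrak a_{\Phi'}^{\,n_1n_2}$, hence differ by a unit of absolute value $1$ at every archimedean place, which is a root of unity by Kronecker's theorem. Absorbing it by a further power, the $p^{N}$-Frobenii of $Y_1,Y_2$ become conjugate for a suitable common $N$, so $Y_1$ and $Y_2$ are isogenous over $\bar{\F}_p$.

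For the existence of a model over $\F_{p^{2c}}$ with all endomorphisms defined there, I would construct $\pi$ by hand. Since $c=\exp(K)$ kills the class group, $\mathfrak a_\Phi^{\,c}=(\alpha)$ is principal, and $(\alpha\bar\alpha)=(\mathfrak a_\Phi\overline{\mathfrak a_\Phi})^{c}=(p)^{c}$ gives $\alpha\bar\alpha=u\,p^{c}$ with $u$ a totally positive unit of $K^{+}$. Set $\pi:=u^{-1}\alpha^{2}$. As $u^{-1}\in K^{+}$ is fixed by $c$, one computes $\pi\,\bar\pi=u^{-2}(\alpha\bar\alpha)^{2}=p^{2c}$, so $\pi$ is a Weil $p^{2c}$-number, while $(\pi)=\mathfrak a_\Phi^{\,2c}$ shows its slopes are $\{0,1\}$. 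Because $\Phi$ and $\overline\Phi$ are disjoint, the ideal $(\bar\pi/\pi)=(\overline{\mathfrak a_\Phi}/\mathfrak a_\Phi)^{2c}$ is nontrivial, so $\bar\pi/\pi$ is not a root of unity and no power of $\pi$ lies in $K^{+}$; hence $\Q(\pi^{k})=K$ for all $k\ge1$, forcing $\End^0_{\F_{p^{2c}}}(Y)=\End^0_{\bar{\F}_p}(Y)=K$ for the Honda--Tate surface $Y$ attached to $\pi$. I expect the main obstacle to be exactly this normalization: producing a generator of $\mathfrak a_\Phi^{2c}$ whose product with its conjugate is \emph{exactly} $p^{2c}$ rather than a unit multiple of it. This is where the factor $2$ is essential and where the phenomenon recorded in the introduction (a unit that is a square in $K$ is a square in $\UU_K$) enters: squaring turns the stray unit into the norm $u^{-2}=N_{K/K^{+}}(u^{-1})$ automatically, whereas with the exponent $c$ alone one would need $u^{-1}$ itself to be a norm from $K$, a condition governed by unit indices that can fail.
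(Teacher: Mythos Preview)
Your proof is correct and follows essentially the same approach as the paper: the paper obtains this proposition as the case $n=2$ of Theorem~\ref{main}, and your argument is precisely that specialization unwound --- the observation that every nontrivial subgroup of $\Z/4\Z$ contains complex conjugation is Subsection~\ref{Gideals}, your CM-types $\Phi$ are the paper's $p$-types with the single $4$-cycle matching the orbit count $2^{2^{n-1}-n}=1$ of Lemma~\ref{transH}, and your construction $\pi=u^{-1}\alpha^{2}$ is exactly the map $\Pi(\mathfrak{B})=z^{2}/u$ of Theorem~\ref{HpOrdinary}.
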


Our main result is the following assertion.

\begin{thm}
\label{main}
Let $n$ be a positive integer and $K$ is a CM field that is a cyclic degree $2^n$ extension of $\Q$. Let $K_0$ be the only degree $2^{n-1}$ subfield of $K$, which is the maximal totally real subfield of $K$. Let us put $c=\exp(K)$.
\begin{itemize}
\item[(i)]
Let $p$ be a prime and $A$ an abelian variety over $\bar{\F}_p$ such that
$\End^0(A)$ is isomorphic to $K$.

Then $p$ splits  completely in $K$ and $A$ is an ordinary simple abelian variety of dimension $2^{n-1}$.
\item[(ii)]
Let $p$ be a prime that splits completely in $K$. Let us put $q=p^c$.

\begin{enumerate}
\item[(1)]
There are precisely $2^{2^{n-1}-n}$ isogeny classes of abelian varieties $A$ over $\bar{\F}_p$,
whose endomorphism algebra 
$\End^{0}(A)$
is isomorphic to $K$. 
\item[(2)]
 Each of these isogeny classes contains an abelian variety that is defined with all its endomorphisms over $\F_{q^2}$.
 \item[(3)]
 Assume additionally that every totally positive unit in $K_0$ is a square in $K_0$. 
 
 Then 
 each of these isogeny classes contains an abelian variety that is defined with all its endomorphisms over $\F_{q}$.
 \end{enumerate}
\end{itemize}
\end{thm}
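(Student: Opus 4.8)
The plan is to reduce everything to Honda--Tate theory together with the combinatorics of $p$-adic CM types, exploiting that $G:=\Gal(K/\Q)$ is cyclic of $2$-power order, so its unique involution (complex conjugation, which I denote $c$) lies in every nontrivial subgroup. For (i), since $K$ is a field $A$ is simple over $\bar{\F}_p$, and the cited Grothendieck--Tate result gives $2\dim A=[K:\Q]=2^n$, so $\dim A=2^{n-1}$. The primes of $K$ above $p$ are permuted by $G$, and since $G$ is abelian all decomposition groups coincide with a single subgroup $D$. If $p$ does \emph{not} split completely, then $D\ne 1$, hence $D$ contains $c$, so $c$ fixes every prime above $p$; this forces every slope to equal $1/2$, i.e.\ $A$ is supersingular. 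But a positive-dimensional supersingular abelian variety over $\bar{\F}_p$ is isogenous to a power of a supersingular elliptic curve and hence has \emph{noncommutative} endomorphism algebra, contradicting $\End^0(A)\cong K$. Therefore $p$ splits completely; each prime above $p$ then has local degree $1$, and the vanishing of the Brauer invariants of the commutative algebra $\End^0(A)=K$ forces each slope $\ord_{\mathfrak p}(\pi)/\ord_{\mathfrak p}(q)$ to be integral, hence $0$ or $1$. Thus $A$ is ordinary.

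For (ii)(1), by (i) the relevant $A$ are ordinary simple with CM by $K$, and $p$ splits into $2^n$ primes falling into $2^{n-1}$ complex-conjugate pairs. Honda--Tate theory attaches to $A$ its $p$-adic CM type $\Phi$ (the set of primes at which Frobenius is a unit), containing exactly one prime from each pair; but since the isomorphism $\End^0(A)\cong K$ is canonical only up to $\Aut(K)=G$, the isogeny class determines $\Phi$ only up to the translation action of $G$ on the $2^n$ primes. I would check this action is \emph{free}: a fixed point would make $\Phi$ invariant under a nontrivial subgroup, hence under $c$, which is impossible as $\Phi$ meets each pair in a single prime. The same computation shows every such $\Phi$ is primitive, so each $G$-orbit genuinely yields a simple variety with endomorphism algebra $K$. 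Counting orbits gives $2^{2^{n-1}}/|G|=2^{2^{n-1}-n}$.

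For (ii)(2) and (3), I would realize each orbit by an explicit Weil number. Writing $\bar\Phi$ for the complementary CM type and $\mathfrak{a}=\prod_{\mathfrak p\in\bar\Phi}\mathfrak p$, the hypothesis $c=\exp(K)$ makes $\mathfrak{a}^{c}$ principal, say $\mathfrak{a}^{c}=(\pi_0)$. Since $p$ splits completely, $\mathfrak{a}\bar{\mathfrak{a}}=(p)$, whence $\pi_0\bar\pi_0=u\,q$ for a totally positive unit $u\in\UU_{K_0}$. Over $\F_{q^2}$ the element $\Pi:=\pi_0^2/u$ satisfies $\Pi\bar\Pi=q^2$ and has CM type $\Phi$ (the unit $u$ does not change the $\mathfrak p$-adic valuations), so it is a Weil $q^2$-number generating $K$; the associated variety is defined with all its endomorphisms over $\F_{q^2}$ and lies in the prescribed isogeny class, giving (2). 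For (3), the extra hypothesis writes $u=w^2$ with $w\in\UU_{K_0}$, and then $\pi:=\pi_0/w$ already satisfies $\pi\bar\pi=q$ with CM type $\Phi$, yielding a model over $\F_q$.

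The main obstacle is (ii)(1): making the correspondence ``isogeny class over $\bar{\F}_p\leftrightarrow$ $G$-orbit of $p$-adic CM type'' precise and bijective --- in particular verifying that Galois-inequivalent types give non-isogenous varieties and that every orbit is realized --- together with the delicate units bookkeeping in (2) and (3) needed to pass from a mere ideal generator $\pi_0$ to an honest Weil number with $\pi\bar\pi$ \emph{exactly} equal to $q$ (respectively $q^2$). The $2$-power cyclicity of $G$ is precisely what simultaneously delivers primitivity of every $\Phi$, freeness of the $G$-action, and the supersingular dichotomy used in (i).
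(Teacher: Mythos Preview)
Your proposal is correct and follows essentially the same route as the paper: the paper likewise exploits that $\rho$ lies in every nontrivial subgroup of the cyclic $2$-power group $G$ to get the supersingular dichotomy (its Lemmas \ref{nonsplit}, \ref{PsplitK} and Theorem \ref{ordinaryAV}), parametrizes ordinary isogeny classes by $G$-orbits of $p$-types via the set $H(p)$ with the same free-action count (Lemma \ref{transH}), and builds the explicit Weil numbers exactly as you do---$\Pi(\mathfrak{B})=z^2/u$ with $z$ a generator of $\mathfrak{B}^c$ and $u=z\bar z/q$ a totally positive unit in $K_0$, taking a square root of $u$ for part (3) (Theorem \ref{HpOrdinary}). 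The only cosmetic differences are that the paper packages ``$\Q(\pi^h)=K$ for all $h$'' (your primitivity) into a separate Lemma \ref{HTlemma} to deduce absolute simplicity and descent of endomorphisms, and that you have overloaded the symbol $c$ for both $\exp(K)$ and complex conjugation.
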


\begin{rem}
\begin{itemize}
\item[(a)]
If $n=1$ then $K$ is an imaginary quadratic field and therefore $K_0=\Q$ and $\UU_{\Q}=\{\pm 1\}$. The only (totally) positive unit in $\Q$ is $1$, which is obviously a square in $\Q$.
Hence, Propositions \ref{elliptic} and \ref{ellipticBis} are the special case of Theorem \ref{main} with $n=1$. On the other hand,  Proposition \ref{surface} follows  readily from the special case of Theorem \ref{main} with $n=2$.  
\item[(b)]
If $n \ge 3$ then the number  $2^{2^{n-1}-n}$ of the corresponding isogeny classes is strictly greater than $1$. This gives us examples of non-isogenous abelian varieties over $\bar{\F}_p$, whose endomorphism algebras are isomorphic to $K$ and therefore are mutually isomorphic.
\item[(c)]
Now let $n$ be an arbitrary positive integer.
By Chebotarev's density theorem, the set of primes that split completely in $K$ is infinite (and even has a positive density $1/2^n$).
 \end{itemize}
 \end{rem}

\begin{cor}
\label{mainFermat}
Let $r$ be a Fermat prime (e.g., $r=3,5,17,257, 65537$). Let $p$ be a prime that is congruent to $1$ modulo $r$. Let us put
\begin{equation}
\label{isg}
\is(r)=\frac{2^{(r-1)/2}}{(r-1)}.
\end{equation}
Then there are precisely $\is(r)$ isogeny classes of simple $(r-1)/2$-dimensional ordinary abelian varieties $A$ over $\bar{\F}_p$,
whose endomorphism algebra 
$$\End^{0}(A)=\End(A)\otimes\Q$$
is isomorphic to the $r$th cyclotomic field $\Q(\zeta_r)$.
 In addition, if $c=\exp(\Q(\zeta_r))$  and $q=p^c$
 then each of these isogeny classes contains an abelian variety that is defined with all its endomorphisms over $\F_{q}$.
\end{cor}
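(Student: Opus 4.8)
The plan is to derive the corollary by applying Theorem \ref{main} to the cyclotomic field $K=\Q(\zeta_r)$, so the first task is to check that $K$ satisfies the hypotheses there. Since $r$ is a Fermat prime, $r-1=2^{n}$ is a power of $2$ (with $n=\log_2(r-1)$), and $\Gal(\Q(\zeta_r)/\Q)\cong(\Z/r\Z)^{\times}$ is cyclic of order $r-1=2^{n}$; thus $K$ is a cyclic degree $2^{n}$ extension of $\Q$. It is a CM field, being totally imaginary with maximal totally real subfield $K_0=\Q(\zeta_r+\zeta_r^{-1})$ of degree $2^{n-1}$. Next I would invoke the standard fact that, for $p\nmid r$, the prime $p$ splits completely in $\Q(\zeta_r)$ if and only if $p\equiv 1\pmod r$; hence our hypothesis is exactly the splitting hypothesis of Theorem \ref{main}(ii). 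Part (i) then already yields that every $A$ over $\bar{\F}_p$ with $\End^{0}(A)\cong K$ is simple, ordinary, and of dimension $2^{n-1}=(r-1)/2$, matching the statement.

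For the count, Theorem \ref{main}(ii)(1) produces exactly $2^{2^{n-1}-n}$ isogeny classes, so it remains only to verify the elementary identity $2^{2^{n-1}-n}=\is(r)$. Substituting $2^{n-1}=(r-1)/2$ and $2^{n}=r-1$ gives
$$2^{2^{n-1}-n}=\frac{2^{(r-1)/2}}{2^{n}}=\frac{2^{(r-1)/2}}{r-1}=\is(r),$$
as required.

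The remaining assertion — that the field of definition can be taken to be $\F_{q}$ rather than $\F_{q^{2}}$ — is where the genuine content lies, and is the main obstacle. It follows from Theorem \ref{main}(ii)(3), but only once one has verified that extra hypothesis for $K_0=\Q(\zeta_r)^{+}$: that every totally positive unit of $K_0$ is a square in $K_0$. I would reformulate this as the surjectivity of the signature homomorphism $\UU_{K_0}\to\{\pm1\}^{2^{n-1}}$, equivalently the statement that the narrow class number of $K_0$ equals its class number. Since $K_0$ is totally real of degree $2^{n-1}$ its unit group has rank $2^{n-1}-1$, so $\UU_{K_0}/\UU_{K_0}^{2}\cong(\Z/2)^{2^{n-1}}$ has the same order as $\{\pm1\}^{2^{n-1}}$; thus surjectivity is equivalent to the kernel (the image of the totally positive units) being trivial, which is precisely the desired statement. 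To settle this for the real cyclotomic field of prime conductor $r$ I would appeal to the known signature-rank results for cyclotomic fields, or argue directly by exhibiting units of prescribed signs among the real cyclotomic units $\sin(\pi a/r)/\sin(\pi/r)$ using the cyclic $2$-group structure of $\Gal(K_0/\Q)$; for the smallest cases this is immediate (for $r=5$ it reduces to the fundamental unit of $\Q(\sqrt5)$ having norm $-1$). Once this condition is in hand, combining it with the isogeny-class count completes the proof.
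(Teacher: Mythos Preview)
Your approach is correct and essentially parallels the paper's, but the routing differs slightly. The paper does not apply Theorem \ref{main} directly; it deduces Corollary \ref{mainFermat} from Corollary \ref{cycleP} by noting that a Fermat prime is the case $m=1$, $2^n=r-1$, and that by Fermat's little theorem every nonzero residue modulo $r$ is a $2^n$th power, so the splitting criterion there reduces to $p\equiv 1\pmod r$. The point is that Corollary \ref{cycleP} already \emph{packages} the totally-positive-unit hypothesis: Remark \ref{Kr0} records that for $K=K^{(r)}$ (here simply $\Q(\zeta_r)$) the class number is odd and, via Hasse's Satz 42 (see also Conner--Hurrelbrink), $K_0$ has units with independent signs, whence every totally positive unit in $K_0$ is a square.

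Your direct verification of the unit condition is where your write-up is weakest: you correctly reformulate it as surjectivity of the signature map (equivalently $h^{+}(K_0)=h(K_0)$), but then gesture at ``known signature-rank results'' or an ad hoc construction with real cyclotomic units. That is not wrong, but it is the one nontrivial input and deserves a precise citation; the paper's route buys exactly this, by isolating the fact once in Remark \ref{Kr0} with explicit references. If you keep your direct approach, cite Hasse or Conner--Hurrelbrink for the independent-signs property of $\Q(\zeta_r)^{+}$ rather than leaving it as a sketch.
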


\begin{rem}
\label{splitP}
The congruence condition on $p$ means that $p$ splits completely in $\Q(\zeta_r)$.  There are infinitely many such $p$, thanks to Dirichlet's theorem
about primes in an arithmetic progression. More precisely, the set of such primes has density $1/(r-1)$.
\end{rem}

\begin{rem}
\label{invIsog}
It is well known that the property of being simple (resp. ordinary) is invariant under isogenies. 
\end{rem}

\begin{rem}
\label{oneISG}
Let $r$ be a Fermat prime. Clearly,  $\is(r)=1$ if and only if $r \le 5$.

 Let $p$ be a prime $p$ that is congruent to $1\bmod r$. It follows from Theorem \ref{main} that $r \le 5$ if and only if
there is a precisely one isogeny class of simple ordinary $(r-1)/2$-dimensional abelelian varieties over $\bar{\F}_p$, whose endomorphiam algebra 
is isomorphic to $\Q(\zeta_r)$. In other words, all such abelian varieties are mutually isogenous over $\bar{\F}_p$, if and only if $r \in \{3,5\}$.
\end{rem}

\begin{ex}
\begin{itemize}
\item[(i)] Take $r=3$. 
We have
$\is(3)=1$. It follows from Remark \ref{oneISG} that
if $p \equiv 1 \bmod 3$ then
all ordinary elliptic curves over  $\bar{\F}_p$ with endomorphism algebra $\Q(\zeta_3)$ are isogenous.
(This assertion seems to be well known.)  This implies that each such elliptic curve is isogenous over $\bar{\F}_p$
to $y^2=x^3-1$.
\item[(ii)]
Take $r=5$. 
We have
$\is(5)=1$. 
It follows from Remark \ref{oneISG} that if $p \equiv 1 \bmod 5$ then
all abelian varieties over  $\bar{\F}_p$ with endomorphism algebra $\Q(\zeta_5)$ are two-dimensional simple ordinary and mutually isogenous.
This implies that each such abelian variety is isogenous to the jacobian of the genus $2$  curve $y^2=x^5-1$.
\end{itemize}
\end{ex}

\begin{ex}
Let us take $r=17$.  Then $\cl(\Q(\zeta_{17}))=1$ \cite{W}. Let us choose a prime $p$ that is congruent to $1$ modulo $17$ (e.g., $p=103$).
We have
$$\is(17)=\frac{2^{8}}{16}=16.$$
By Theorem \ref{main}, there are precisely $16$ isogeny classes of simple ordinary $\frac{16}{2}=8$-dimensional abelian varieties over
$\bar{\F}_p$ with endomorphism algebras $\Q(\zeta_{17})$. In addition, each of these isogeny classes contains an abelian eightfold
that is defined with all its endomorphisms over $\F_{p}$. 

This implies  that there exist sixteen
$8$-dimensional ordinary simple abelian varieties $A_1, \dots, A_{16}$ over $\bar{\F}_p$ that are  mutually {\sl non}-isogenous but each endomorphism algebra
$\End^0(A_i)$ is isomorphic to  $\Q(\zeta_{17})$  (for all $i$ with $1 \le i \le 16$). In particular, 
$$\End^0(A_i) \cong \End^0(A_j) \ \forall i,j \ (1 \le i<j \le 16).$$
In addition, each $A_i$ and all its endomorphisms are defined over $\F_{p}$.
This gives an answer to a question of L. Watson \cite{Watson}.
\end{ex}

The following assertion is a natural generalization of Corollary \ref{mainFermat}.
\begin{cor}
\label{cycleP}
Let $r$ be an odd prime and
$(r-1)=2^n \cdot m$ where $n$ is a positive integer and $m$ is a positive odd integer. Let $\HH$ be the only order $m$ subgroup
of the cyclic Galois group
$$\Gal(\Q(\zeta_r)/\Q)=(\Z/r\Z)^{*}$$
of order $(r-1)$.
Let 
\begin{equation}
\label{Kr}
K=K^{(r)}:=\Q(\zeta_r)^{\HH}
\end{equation}
 be the subfield of $\HH$-invariants in $\Q(\zeta_r)$.

Then:
\begin{itemize}
\item[(0)]
$K^{(r)}$ is a CM field  that is a cyclic degree $2^n$ extension of $\Q$.  In addition,  a prime $p$ splits completely in $K^{(r)}$ if and only if $p \ne r$ and
$p \bmod r$ is a $2^n$th power in $\F_r$.
\item[(i)]
Let $p$ be a prime and $A$ an abelian variety over $\bar{\F}_p$ such that
$\End^0(A)$ is isomorphic to $K^{(r)}$.

Then $p$ splits  completely in $K^{(r)}$ and $A$ is an ordinary simple abelian variety of dimension $2^{n-1}$.
\item[(ii)]
Let $p$ be a prime that splits completely in $K^{(r)}$ and let $q=p^c$ where $c=\exp(K^{(r)})$.

 Then there are precisely $2^{2^{n-1}-n}$ isogeny classes of abelian varieties $A$ over $\bar{\F}_p$,
whose endomorphism algebra 
$\End^{0}(A)$
is isomorphic to $K^{(r)}$.
 In addition,  each of these isogeny classes contains an abelian variety that is defined with all its endomorphisms over $\F_{q}$.
 
\end{itemize}
\end{cor}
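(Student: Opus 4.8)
The plan is to reduce the whole statement to Theorem \ref{main}, so the first task is part (0): to check that $K^{(r)}$ is a CM field that is a cyclic degree $2^n$ extension of $\Q$. Since $\Gal(\Q(\zeta_r)/\Q)=(\Z/r\Z)^{*}$ is cyclic of order $r-1=2^n m$ with $m$ odd, it has a unique subgroup $\HH$ of order $m$, and in a cyclic group of order $2^n m$ this subgroup is precisely the set of $2^n$th powers; as a subgroup of an abelian group it is normal, so $K^{(r)}=\Q(\zeta_r)^{\HH}$ is Galois over $\Q$ with cyclic group $(\Z/r\Z)^{*}/\HH$ of order $2^n$. Complex conjugation is $-1\in(\Z/r\Z)^{*}$, of order $2$, hence not in the odd-order group $\HH$; it therefore acts nontrivially on $K^{(r)}$, making $K^{(r)}$ totally imaginary with maximal totally real subfield $K_0$ (fixed by $\langle\HH,-1\rangle$) of degree $2^{n-1}$, so $K^{(r)}$ is CM. Finally a prime $p\neq r$ splits completely in $K^{(r)}$ iff its Frobenius $p\bmod r$ lies in $\HH$, i.e. iff $p\bmod r$ is a $2^n$th power in $\F_r$; this is part (0).

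Granting (0), I would apply Theorem \ref{main} verbatim to $K=K^{(r)}$: its part (i) gives (i), and its part (ii)(1) gives the exact count $2^{2^{n-1}-n}$ of isogeny classes in (ii). Theorem \ref{main}(ii)(2) already places in each isogeny class an abelian variety defined with all its endomorphisms over $\F_{q^2}$, where $q=p^{c}$ and $c=\exp(K^{(r)})$. Thus everything except the passage from $\F_{q^2}$ down to $\F_q$ is immediate.

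To obtain $\F_q$ rather than $\F_{q^2}$, I would invoke Theorem \ref{main}(ii)(3), whose hypothesis is that every totally positive unit in $K_0$ is a square in $K_0$; the corollary then follows once this is verified for $K_0$. First I would reformulate the condition: writing $d=2^{n-1}=[K_0:\Q]$ and $\sign\colon\UU_{K_0}\to\{\pm1\}^{d}$ for the map recording the signs under the $d$ real embeddings, the totally positive units are $\ker(\sign)$, and they equal $\UU_{K_0}^{2}$ exactly when $\sign$ is surjective (both $\UU_{K_0}/\UU_{K_0}^{2}$ and $\{\pm1\}^{d}$ have order $2^{d}$, since $\UU_{K_0}$ has rank $d-1$ and torsion $\{\pm1\}$). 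By the exact sequence relating the narrow and wide class groups one has $\cl^{+}(K_0)=\cl(K_0)\cdot[\{\pm1\}^{d}:\sign(\UU_{K_0})]$ with the index a power of $2$, so $\sign$ is surjective iff $\cl^{+}(K_0)=\cl(K_0)$; in particular it suffices to prove that the narrow class number $\cl^{+}(K_0)$ is odd.

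Proving $\cl^{+}(K_0)$ odd is the heart of the matter and the step I expect to be hardest. The tool is genus theory. Only the prime $r$ ramifies in $K_0/\Q$ (since $K_0\subset\Q(\zeta_r)$ and $r$ is totally ramified there), and $K_0$ is cut out by a group $X$ of even Dirichlet characters of conductor dividing $r$. The narrow genus field of $K_0$ is cut out by the group $\tilde{Y}\supseteq X$ of all finite-conductor characters that are unramified over $K_0$ at every finite prime; a direct check shows that such a character must be unramified outside $r$ and have its $r$-component in $X$, hence must itself lie in $X$. Therefore $\tilde{Y}=X$ and the narrow genus number $[\tilde{Y}:X]$ equals $1$. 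Since $\Gal(K_0/\Q)$ is a $2$-group acting on the $2$-part of the narrow class group, a Nakayama argument shows that this $2$-part is nonzero only if its coinvariants — that is, the genus group — are nonzero; as the genus group is trivial, $\cl^{+}(K_0)$ is odd. The delicate point is the bookkeeping at the infinite places, namely ruling out odd characters of conductor $r$ from $\tilde{Y}$; it is exactly the single--ramified--prime structure of the subfields $K^{(r)}$ of a \emph{prime} cyclotomic field that makes this succeed, and this is where the primality of $r$ is used.
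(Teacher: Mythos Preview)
Your reduction to Theorem \ref{main} for parts (0), (i), and (ii)(1)--(2) is exactly the paper's approach, and your verification of (0) matches the paper's proof essentially line for line.

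The genuine point of comparison is how you verify the extra hypothesis of Theorem \ref{main}(ii)(3), namely that every totally positive unit of $K_0$ is a square. The paper does not argue this directly in the proof of the corollary; it simply invokes Remark \ref{Kr0}, which in turn cites classical results of Hasse and Conner--Hurrelbrink: $\cl(K^{(r)})$ is odd, the quadratic extension $K/K_0$ is ramified (at $r$), and from these one deduces that $K_0$ has units with independent signs, hence that totally positive units are squares. Your route is different and more self-contained: you show directly that $\cl^{+}(K_0)$ is odd via genus theory (only $r$ ramifies, so the narrow genus field of $K_0/\Q$ is $K_0$ itself, and a Nakayama argument on the $2$-part of $Cl^{+}(K_0)$ under the $2$-group $\Gal(K_0/\Q)$ finishes it), and then use the standard exact sequence relating $Cl^{+}(K_0)$ and $Cl(K_0)$ to conclude that the sign map is surjective. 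Both arguments are correct; yours avoids the black-box citations at the cost of spelling out the genus-theory computation, while the paper's is shorter but leans on the literature. Your remark that the ``delicate point'' is ruling out odd characters of conductor $r$ is in fact automatic here: since every $\chi\in X$ has conductor dividing $r$, one has $X_r=X$ as a group of even characters of $(\Z/r\Z)^*$, and any $\chi$ with $\chi_\ell=1$ for $\ell\neq r$ and $\chi_r\in X_r=X$ satisfies $\chi=\chi_r\in X$, hence is even. So the bookkeeping you flag as delicate actually falls out immediately from the single-ramified-prime structure you identified.
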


\begin{rem}
\label{Kr0}
Let $K=K^{(r)}$. It is well known that $r$ is totally ramified in $\Q(\zeta_r)$  and therefore in its subfield $K$ as well. This implies that if $K_0$ is the only degree $2^{n-1}$ subfield of $K$, which is the maximal totally real subfield of $K$,  then the quadratic extension $K/K_0$ is {\sl ramified}.  On the other hand, it is known that
(\cite[Sect. 38]{Hasse}, \cite[p. 77-78]{CH}) that $\cl(K^{(r)})$ is {\sl odd}  (and therefore $c=\exp(K^{(r)})$ is also odd).  It follows from  \cite[Sect. 37, Satz 42]{Hasse} (see also \cite[Cor. 13.10 on p. 76 ]{CH}) that
 $K_0$ has {\sl units with independent signs} (there are units of $K_0$ of every possible signature), which implies (thanks to \cite[Lemma 12.2 on p. 55]{CH}) that every {\sl totally positive} unit in $K_0$ is a square in $K_0$ and therefore is a square in $\UU_{K_0}$.
\end{rem}

\begin{ex}
Let us fix an integer $n \ge 2$. Here is a construction of infinitely many mutually non-isomorphic CM fields that are cyclic degree $2^n$ extensions of $\Q$.  Let us consider the infinite (thanks to Dirichlet's theorem) set of primes $r$ that are congruent to $1+2^n$ modulo $2^{n+1}$.  Then
$r-1=2^n \cdot m$ where $m$ is an odd positive integer. In light of Corollary \ref{cycleP}, the corresponding
subfield $K^{(r)}$ of $\Q(\zeta_r)$  defined by \eqref{Kr} enjoys the desired properties.
Since $K^{(r)}$ is a subfield of  $\Q(\zeta_r)$, the field extension  $K^{(r)}/\Q$ is ramified precisely at $r$. This implies that the fields $K^{(r)}$ are mutually non-isomorphic (and even linearly disjoint) for distinct $r$.
\end{ex}

The paper is organized as follows.  In Section \ref{prel} we review basic results about maximal ideals of $\OO_K$.
In Section \ref{ordinaryS} we concentrate on so called {\sl ordinary} Weil's $q$-numbers in $K$.
In Section \ref{WeilKK} we discuss simple abelian varieties over $\F_q$, whose Weil's numbers lie in $K$. 
In Section \ref{HondaTate} we discussed some basic facts of Honda-Tate theory \cite{T66,Honda,T}.
Section \ref{mainProof} contains proofs of main results.

In what follows we will freely use the following elementary well known observation. {\sl Any $\Q$-subalgebra with $1$ of a number field $K$ is actually a subfield of $K$; in particular, it is also a number field. E.g., if $u$ is an element of $L$ then the subfield $\Q(u)$ generated by $u$ coincides with the $\Q$-subalgebra $\Q[u]$ generated by $u$.}

{\bf Acknowledgements}. I am grateful to Ley Watson  for an interesting stimulating question \cite{Watson}.

\section{Preliminaries}

\begin{sect}
\label{not} We keep the notation  and assumptions of Subsection \ref{zero} and Theorem \ref{main}.  As usual,  $\Q,\R,\C$ stand for the fields of rational, real and complex numbers and $\bar{\Q}$ for the (algebraically closed) subfield of all algebraic numbers in $\C$. We write $\Z$ (resp. $\Z_{+}$) for the ring of integers (resp. for the additive semigroup of {\bf nonnegative} integers). If $T$ is a finite set then we write $\#(T)$ for the number of its elements.

Recall \cite{Honda,T} that an algebraic integer $\pi\in \bar{\Q}$ is called a {\sl Weil's $q$-number} if all its Galois-conjugates have the archimedean absolute value $\sqrt{q}$.

Throughout this paper, $n$ is a positive integer and $K$ is a CM field that is a degree $2^n$ cyclic extension of $\Q$.
 We view $K$ as a subfield of  $\C$; in particuar,  $K$ is a subfield of $\bar{\Q}$ that is stable under the {\sl complex conjugation}. We denote by
$$\rho: K \to K$$
the restriction of the complex conjugation to $K$; one may view $\rho$ as an element of  order 2 in the Galois group
$$G:=\Gal(K/\Q)$$
where $G$ is a cyclic group of order $2^n$.

\begin{rem}
\label{useful}
Let $\pi \in K\subset \C$. 
\begin{itemize}
 \item 
 Suppose that $\pi$ is a Weil's $q$-number.  Then $\pi$ is a algebraic integer, i.e.,
 $\pi   \in \OO_K$. Since the absolute value of $\pi$ is the square root of $q$, we have
 $\pi \cdot \rho(\pi)=q$.
\item
Conversely, suppose that  
$\pi   \in \OO_K$ (i.e., $\pi$ is an algebraic integer) and 
 \begin{equation}
\label{Riemann}
\pi \cdot \rho(\pi)=q
\end{equation}
Since $K/\Q$ is Galois,   all the Galois-conjugates of $\pi$  also lie in $\OO_K$
and constitute the orbit 
$$G \pi=\{\sigma(\pi)\mid \sigma \in G\}$$ of 
$G$.  Since $G$ is commutative and contains $\rho$, it follows from \eqref{Riemann}
that  for all $\sigma \in G$ 
$$\sigma(\pi)\cdot \rho(\sigma(\pi))=\sigma(\pi)\cdot \sigma(\rho(\pi))=\sigma(\pi\cdot \rho(\pi))=\sigma(q)=q.$$ 
\end{itemize}

It follows readily  that {\it $\pi \in K$ is a Weil's $q$-number if and only if $\pi \in \OO_K$ and \eqref{Riemann} holds.}
\end{rem}

We write $W(q,K)$ for the set of Weil's $q$-numbers in $K$ and $\mu_K$ for the (finite cyclic) multiplicative
group of roots of unity in $K$. Clearly, $W(q,K)$ is a finite $G$-stable subset of $\OO_K$, which is also stable under
multiplication by elements of $\mu_K$. The latter gives rise to the free action of $\mu_K$ on $W(q,K)$  defined by
$$\mu_K \times W(q,K) \to W(q,K), \ \zeta, \pi \mapsto \zeta \pi \ \forall \zeta\in \mu_K, \pi \in W(q,K).$$
\begin{rem}
\label{kron}
It is well known (and follows easily from a theorem of Kronecker \cite[Ch. IV, Sect. 4, Th.8]{Weil}) that $\pi_1,\pi_2 \in  W(q,K)$ lie in the same $\mu_K$-orbit
(i.e., $\pi_2/\pi_1$ is a root of unity) if and only if the  ideals $\pi_1\OO_K$ and $\pi_2\OO_K$ of $\OO_K$ do coincide.
\end{rem}
\end{sect}

\label{prel}
Recall (Subsection \ref{not}) that $K$ is a subfield of the field $\C$ of complex numbers that is stable under the complex conjugation.
Then 
$$K_0:=K \bigcap\R$$
is a (maximal) {\sl totally real} number (sub)field, whose degree $[K_0:\Q]$ is
$$\frac{[K:\Q]}{2}=\frac{2^n}{2}=2^{n-1}.$$

\begin{sect}
\label{Gideals}
Recall that
the Galois group
$G=\Gal(K/\Q)$
is a cyclic group of order $2^n$. Hence, it has precisely one element of order $2$ and therefore this element must coincide with the {\sl complex conjugation} 
$$\rho: K \to K.$$
The properties of $G$ imply that every nontrivial subgroup $H$ of $G$ contains $\rho$.
It follows that every proper subfield of $K$ is {\sl totally real}. Indeed, each such subfield is the
subfield $K ^H$ of $H$-invariants for a certain nontrivial subgroup $H$ of $G$. Since $H$ contains $\rho$, the subfield
$K^H$ consists of $\rho$-invaraiants and therefore is totally real; in particular,
$$K^H \subset \R.$$
\end{sect}

\begin{sect}
\label{freeGaction}
Let $\ell$ be a prime and $S(\ell)$ be the set of maximal ideals $\mathfrak{P}$ of $\OO_K$
that divide $\ell$. Since $K/\Q$ is a Galois extension,  $G$ acts transitively on $S(\ell)$. In particular, $\#(S(\ell))$ divides $\#(G)=2^n$.
This implies that if $\ell$ {\sl splits completely} in $K$, i.e., 
$$\#(S(\ell))=2^n=\#(G)$$
 then the action of $G$ on $S(\ell)$ is {\sl free}. 

On the other hand, if  a prime $\ell$ does {\sl not} split completely in $K$, i.e., 
$$\#(S(\ell))<2^n=\#(G),$$
 then the action of $G$ on $S(\ell)$ is {\sl not} free. 
Let $H(\ell)$ be the stabilizer of any $\mathfrak{P}\in S(\ell)$, which does not depend on a choice of $\mathfrak{P}$, because $G$ is commutative.
Then $H(\ell)$ is a nontrivial subgroup of $G$ and therefore contains $\rho$, i.e.,
$$\rho(\mathfrak{P})=\mathfrak{P} \ \forall \mathfrak{P} \in S(\ell)$$
if $\ell$ does {\sl not} split completely in $K$.

Let $e(\ell)$ be the {\sl ramification index} in $K/\Q$ of  $\mathfrak{P}\in S(\ell)$, which does {\sl not} depend on $\mathfrak{P}$,
because $K/\Q$ is Galois. We have the equality of ideals
\begin{equation}
\label{ramification}
\ell \OO_K=\prod_{\mathfrak{P}\in S(\ell)}\mathfrak{P}^{e(\ell)}.
\end{equation}
It follows that $K/\Q$ is {\sl unramified} at $\ell$ if and only if $e(\ell)=1$.
We write
\begin{equation}
\label{ordP}
\mathrm{ord}_{\mathfrak{P}}: K^{*} \twoheadrightarrow \Z
\end{equation}
for the discrete valuation map attached to $\mathfrak{P}$. We have
\begin{equation}
\label{ramE}
\mathrm{ord}_{\mathfrak{P}}(\ell)=e(\ell) \ \forall \mathfrak{P}\in S(\ell);
\end{equation}
\begin{equation}
\label{ordOO}
\mathrm{ord}_{\mathfrak{P}}(u)\ge 0 \ \forall  u \in \OO_R\setminus\{0\}, \mathfrak{P}\in S(\ell);
\end{equation}
\begin{equation}
\label{ordrho}
\mathrm{ord}_{\mathfrak{P}}(\rho(u))=\mathrm{ord}_{\rho(\mathfrak{P})}(u) \ \forall  u \in K^{*}, \mathfrak{P}\in S(\ell).
\end{equation}
\end{sect}

\begin{sect}
Let $p$ be a prime, $j$ a positive integer, and $q=p^j$. 

Let $\pi \in O_K$ be a Weil's $q=p^j$-number.
Let us consider the ideal $\pi\OO_K$ in $\OO_K$. Then there is a nonnegative integer-valued function
\begin{equation}
\label{mPi}
d_{\pi}: S(p) \to \Z_{+},  \ \mathfrak{P} \mapsto d_{\pi}(\mathfrak{P}):=\mathrm{ord}_{\mathfrak{P}}(\pi)
\end{equation}
such that
\begin{equation}
\label{prodPi}
\pi\OO_K=\prod_{\mathfrak{P}\in S(p)}\mathfrak{P}^{d_{\pi}(\mathfrak{P})}.
\end{equation}
It follows from \eqref{Riemann} that
\begin{equation}
\label{mPIrho}
d_{\pi}(\mathfrak{P})+d_{\pi}(\rho(\mathfrak{P}))=\mathrm{ord}_{\mathfrak{P}}(q)=j \cdot e(\ell) \ \forall \mathfrak{P}\in S(p).
\end{equation}
\end{sect}

\begin{lem}
\label{nonsplit}
Let $\pi \in O_K$ be a Weil's $q=p^j$-number. If $p$ does not split completely in $K$
then $\pi^2/q$ is a root of unity.
\end{lem}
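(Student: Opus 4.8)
The plan is to show that when $p$ does not split completely in $K$, the function $d_\pi$ on $S(p)$ is forced to be constant, which by Remark~\ref{kron} is equivalent to $\pi^2/q$ being a root of unity. First I would invoke Subsection~\ref{freeGaction}: since $p$ does not split completely, the stabilizer $H(p)$ of any $\mathfrak{P}\in S(p)$ is a nontrivial subgroup of the cyclic $2$-group $G$, hence contains the complex conjugation $\rho$. Therefore $\rho(\mathfrak{P})=\mathfrak{P}$ for every $\mathfrak{P}\in S(p)$.

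With $\rho(\mathfrak{P})=\mathfrak{P}$ in hand, the fundamental relation \eqref{mPIrho} simplifies dramatically. Substituting $\rho(\mathfrak{P})=\mathfrak{P}$ into
$$d_{\pi}(\mathfrak{P})+d_{\pi}(\rho(\mathfrak{P}))=j\cdot e(p)$$
yields $2\,d_{\pi}(\mathfrak{P})=j\cdot e(p)$ for every $\mathfrak{P}\in S(p)$, so that $d_{\pi}(\mathfrak{P})=j\cdot e(p)/2$ is the same value for all $\mathfrak{P}$. In other words, the function $d_\pi\colon S(p)\to\Z_+$ is constant; call this common value $m$. Comparing \eqref{prodPi} with the factorization \eqref{ramification} of $p\OO_K$ gives
$$\pi\OO_K=\prod_{\mathfrak{P}\in S(p)}\mathfrak{P}^{m}=\Bigl(\prod_{\mathfrak{P}\in S(p)}\mathfrak{P}^{e(p)}\Bigr)^{m/e(p)}=(p\OO_K)^{m/e(p)}.$$
Since $2m=j\cdot e(p)$, we get $m/e(p)=j/2$, so $\pi\OO_K=(p\OO_K)^{j/2}=p^{j/2}\OO_K$ as ideals; squaring (to avoid any issue of whether $j$ is even) gives the cleaner identity $(\pi^2)\OO_K=p^{j}\OO_K=q\OO_K$.

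Finally I would translate this equality of ideals back into the statement about roots of unity. Both $\pi^2$ and $q$ are Weil $q^2$-numbers: $q=p^j\in\OO_K$ trivially satisfies $q\cdot\rho(q)=q^2$, and $\pi^2$ satisfies $\pi^2\rho(\pi^2)=(\pi\rho(\pi))^2=q^2$ by \eqref{Riemann}, so both lie in $W(q^2,K)$. Since $(\pi^2)\OO_K=q\OO_K$, the two elements generate the same ideal of $\OO_K$, so by the Kronecker-type criterion recorded in Remark~\ref{kron}, their ratio $\pi^2/q$ is a root of unity, as desired. The only genuinely substantive point in this argument is the very first one---that failure to split completely forces $\rho$ into every stabilizer and hence fixes every prime above $p$---and this has already been established in Subsection~\ref{freeGaction} from the fact that $G$ is cyclic of $2$-power order; everything after that is a short manipulation of \eqref{mPIrho} and \eqref{ramification}.
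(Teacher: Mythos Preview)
Your proof is correct and follows essentially the same path as the paper's: both use Subsection~\ref{freeGaction} to get $\rho(\mathfrak{P})=\mathfrak{P}$, then \eqref{mPIrho} to force $d_\pi$ constant, and finally Kronecker's theorem to conclude. The only cosmetic difference is that the paper verifies directly that $\pi^2/q$ is a unit at every place with all archimedean absolute values equal to $1$, whereas you package this as the ideal equality $(\pi^2)\OO_K=q\OO_K$ and appeal to Remark~\ref{kron}; your intermediate expression $(p\OO_K)^{m/e(p)}$ with a possibly non-integral exponent is a bit informal, but you immediately repair it by squaring, so no harm is done.
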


\begin{proof}

Since  $p$ does not split completely in $K$,  it follows from arguments of Subsection \ref{Gideals} that
$$\rho(\mathfrak{P})=\mathfrak{P} \ \forall \mathfrak{P}\in S(p).$$
It follows from \eqref{mPIrho} that
$$d_{\pi}(\mathfrak{P})=\frac{j\cdot e(p)}{2}  \ \forall \mathfrak{P}\in S(p);$$
in particular, $j$ is {\sl even} if $e(p)=1$ (i.e., if $K/\Q$ is {\sl unramified}  at $p$).
This implies that $\pi^2/q$ is a $\mathfrak{P}$-adic unit for all $\mathfrak{P}\in S(p)$.
On the other hand, it follows from \eqref{Riemann} that $\pi^2/q$ is an $\ell$-adic unit
for all primes $\ell \ne p$. It follows from the very definition of Weil's numbers that 
$$|\sigma\left(\pi^2/q\right)|_{\infty}=1 \ \forall \sigma \in G.$$
(Here
$| \ |_{\infty}:\C \to \R_{+}$ is the standard archimedean value on $\C$.)
 Now it follows from a classical theorem of Kronecker
\cite[Ch. IV, Sect. 4, Th. 8]{Weil} that $\pi^2/q$ is a root of unity.
\end{proof}

\begin{lem}
\label{PsplitK}
Suppose that a prime $p$ completely splits in $K$. (In particular, $K/\Q$ is unramified at $p$.)
 Let $\pi \in O_K$ be a Weil's $q=p^j$-number.
 
 Then either  $\Q(\pi)=K$ or $j$ is even and  $\pi=\pm p^{j/2}$ . 
\end{lem}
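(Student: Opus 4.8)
The plan is to use the cyclic structure of the Galois group to force $\Q(\pi)$ to be totally real whenever it is a \emph{proper} subfield of $K$, and then to extract everything from the defining relation \eqref{Riemann} for Weil numbers. First I would observe that $\Q(\pi)$, being a $\Q$-subalgebra with $1$ of the number field $K$, is itself a subfield of $K$ (the elementary observation recorded before the Preliminaries). If $\Q(\pi)=K$ we are in the first alternative and there is nothing to prove, so I would assume $\Q(\pi)\subsetneq K$. Then $\Q(\pi)=K^{H}$ for the nontrivial subgroup $H=\Gal(K/\Q(\pi))$ of $G$, and by the discussion in Subsection \ref{Gideals} every nontrivial subgroup of the cyclic $2$-group $G$ contains the complex conjugation $\rho$. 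Hence $\Q(\pi)$ is totally real; in particular $\pi\in\R$ and $\rho(\pi)=\pi$.

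Next I would feed $\rho(\pi)=\pi$ into the Weil relation \eqref{Riemann}, which gives $\pi^2=\pi\cdot\rho(\pi)=q=p^{j}$, so that $\pi=\pm p^{j/2}$. It then remains only to verify that $j$ is even (equivalently, that $p^{j/2}$ is an integer). For this I would reuse the valuation bookkeeping already assembled: since $\rho(\pi)=\pi$, equation \eqref{ordrho} yields $d_{\pi}(\mathfrak{P})=\mathrm{ord}_{\mathfrak{P}}(\pi)=\mathrm{ord}_{\mathfrak{P}}(\rho(\pi))=\mathrm{ord}_{\rho(\mathfrak{P})}(\pi)=d_{\pi}(\rho(\mathfrak{P}))$ for every $\mathfrak{P}\in S(p)$. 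Because $p$ splits completely we have $e(p)=1$, so \eqref{mPIrho} reads $d_{\pi}(\mathfrak{P})+d_{\pi}(\rho(\mathfrak{P}))=j$; combining the two gives $2\,d_{\pi}(\mathfrak{P})=j$, whence $j$ is even and $\pi=\pm p^{j/2}\in\Z$, as desired.

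The main obstacle I anticipate is making the passage from ``$\Q(\pi)$ totally real'' to ``$\pi$ rational'' watertight: one must exclude the possibility that $\Q(\pi)$ is the real quadratic field $\Q(\sqrt{p})$, which is precisely what would occur if $j$ were odd. The valuation computation above handles this cleanly. An equally short alternative route, which I would mention as a cross-check, is to note that $\Q(\sqrt{p})$ is ramified at $p$, whereas $K/\Q$—and hence each of its subfields—is unramified at $p$ because $p$ splits completely (so $e(p)=1$); this contradiction again forces $j$ even. Either way the essential point is that completeness of the splitting, which makes the $G$-action on $S(p)$ free and $e(p)=1$, is exactly the hypothesis that rules out a ``partial'' real Weil number and pins the exceptional case down to $\pm p^{j/2}$.
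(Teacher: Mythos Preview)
Your proof is correct and follows essentially the same route as the paper: both use Subsection~\ref{Gideals} to conclude that a proper subfield $\Q(\pi)\subsetneq K$ must be totally real, then apply \eqref{Riemann} to get $\pi^2=q$, and finally argue that $j$ is even. The only cosmetic difference is in this last step: the paper observes that $q\OO_K=(\pi\OO_K)^2$ is a square and compares with the factorization \eqref{qj}, while you compute $2\,d_\pi(\mathfrak{P})=j$ directly from \eqref{mPIrho} and \eqref{ordrho}; these are two phrasings of the same valuation count.
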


\begin{proof}
So, $K/\Q$  is unramified at $p$, i.e., $e(p)=1$ and
\begin{equation}
\label{pOK}
p\OO_K=\prod_{\mathfrak{P}\in S(p)}\mathfrak{P}.
\end{equation}
This implies that
\begin{equation}
\label{qj}
q\OO_K=\prod_{\mathfrak{P}\in S(p)}\mathfrak{P}^{j}.
\end{equation}
Since $p$ splits completely in $K$, the group $G$ acts freely on $S(p)$, in light of Subsection \ref{freeGaction}. In particular,
\begin{equation}
\label{PrhoP}
\mathfrak{P} \ne \rho(\mathfrak{P}) \ \forall \mathfrak{P}\in S(p).
\end{equation}

If the subfield $\Q(\pi)$ of $K$ does {\sl not} coincide with $K$ then it is {\sl totally real}, thanks to arguments of Subsection \ref{Gideals}.
This implies that $\rho(\pi)=\pi$. It follows from \eqref{Riemann} that $\pi^2=q$, i.e., $\pi=\pm p^{j/2}$. This implies that
the ideal $q\OO_K$ is a {\sl square}. It follows from \eqref{qj} that $j$ is {\sl even}.
\end{proof}

\begin{sect}
Suppose that a prime $p$ completely splits in $K$. 
\begin{defn}
 Let $\pi \in O_K$ be a Weil's $q=p^j$-number. We say that $\pi$ is an {\sl ordinary} Weil's $q$-number if the ``slope''
 $\mathrm{ord}_{\mathfrak{P}}(\alpha)/\mathrm{ord}_{\mathfrak{P}}(q)$ is an
 {\sl integer} for all $\mathfrak{P}\in S(p)$.
\end{defn}

It (is well known and) follows from \eqref{Riemann}, \eqref{ordOO} and \eqref{ordrho} that
if $\pi$ is an  ordinary Weil's $q$-number then
\begin{equation}
\label{ordinary01}
\frac{\mathrm{ord}_{\mathfrak{P}}(\pi)}{\mathrm{ord}_{\mathfrak{P}}(q)}=0 \ \text{ or } 1.
\end{equation}
\end{sect}

\section{Equivalence classes of ordinary Weil's $q$-numbers}
\label{ordinaryS}

Let $p$ be a prime that splits completely in $K$.  Throughout this section, by Weil's numbers we mean
Weil's $q$-numbers where $q$ is a power of $p$.  We write $W(q,K)$ for the set of Weil's $q$-numbers in $K$.
We write $\mu_K$ for the (finite cyclic) multiplicative group of roots of unity in $K$.


\begin{defn}
Let $q$ and $q^{\prime}$ be integers $>1$ that are integral powers of $p$.
Let $\pi \in K$ (resp.  $\pi^{\prime}\in K$) be a Weil's $q$-number (resp.   Weil's $q^{\prime}$-number).
Following Honda \cite{Honda}, we say that $\pi$ and $\pi^{\prime}$ are equivalent,
if there are positive integers $a$ and $b$ such that
$\pi^a$ is Galois-conjugate to ${\pi^{\prime}}^b$.
\end{defn}
Clearly, if  $\pi$ and $\pi^{\prime}$ are equivalent then $\pi$ is ordinary if and only if $\pi^{\prime}$ is ordinary.
In order to classify ordinary Weil's numbers in $K$ up to equivalence, we introduce the following notion that is inspired
by the notion of CM type for complex abelian varieties \cite{Shimura} (see also \cite[Sect. 1, Th. 2]{Honda} and  \cite[Sect. 5]{T}).

\begin{defn}
We call a  subset $\Phi\subset S(p)$ a {\sl $p$-type} if $S$ is a disjoint union of $\Phi$ and $\rho(\Phi)$.

Clearly, $\Phi\subset S(p)$ is a $p$-type  if and only if the following two conditions hold (recall that $[K:\Q]=2^n$).

\begin{itemize}
\item[(i)]  $\#(\Phi)=2^{n-1}$.
\item[(ii)] If $\mathfrak{P} \in \Phi$ then $\rho(\mathfrak{P}) \not \in \Phi.$

It is also clear that $\Phi\subset S(p)$ is a $p$-type if and only if $\rho(\Phi)$ is a $p$-type.

\end{itemize}
\end{defn}

Let $H(p)$ be the set of nonzero ideals $\mathfrak{B}$ of $\OO_K$ such that 
$$\mathfrak{B}\cdot \rho(\mathfrak{B})=p \cdot \OO_K.$$ 
In light of \eqref{pOK} and \eqref{PrhoP},
an ideal  $\mathfrak{B}$ of $\OO_K$ lies in $H(p)$ if and only if there exists a 
$2^{n-1}$-element subset $\Phi=\Phi(\mathfrak{B})$ of $H(p)$ that meets every $\rho$-orbit of $S(p)$ at exactly one place and
\begin{equation}
\label{HTB}
\mathfrak{B}=\prod_{\mathfrak{P}\in \Phi(\mathfrak{B})}\mathfrak{P}.
\end{equation}
Such a $\Phi(\mathfrak{B})$ is uniquely determined by $\mathfrak{B}\in H(p)$: namely, it coincides with the set of maximal ideals in $\OO_K$ that contain $\mathfrak{B}$. This implies that
\begin{equation}
\label{cardH}
\#(H(p))=2^{2^{n-1}}.
\end{equation}
Clearly,
\begin{equation}
\label{GaloisH}
\Phi(\sigma(\mathfrak{B}))=\sigma(\Phi(\mathfrak{B})) \ \forall \sigma \in G.
\end{equation}

\begin{lem}
\label{ordinaryPhi}
Let $m$ be a positive integer and $\pi$ be a Weil's $q=p^m$-number in $K$. Then the following conditions are equivalent.
\begin{itemize}
\item[(i)]
$\pi$ is ordinary.
\item[(ii)]
There exists an ideal $\mathfrak{B} \in H(p)$
such that
\begin{equation}
\label{HpPi}
\pi\OO_K=\mathfrak{B}^m.
\end{equation}
\item[(iii)]
The subset 
\begin{equation}
\label{PsiPiOrd}
\Psi(\pi):=\{\mathfrak{P}\in S(p)\mid  \frac{\mathrm{ord}_{\mathfrak{P}}(\pi)}{\mathrm{ord}_{\mathfrak{P}}(q)}=1\}
\end{equation}
is a $p$-type.
\end{itemize}
If these equivalent conditions hold then such an ideal $\mathfrak{B}$ is unique and
$$\Phi(\mathfrak{B})=\Psi(\pi).$$
\end{lem}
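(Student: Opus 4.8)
The plan is to exploit two structural facts that hold because $p$ splits completely in $K$. First, since $e(p)=1$ we have $\ord_{\mathfrak{P}}(q)=m$ for every $\mathfrak{P}\in S(p)$, so the ``slope'' at $\mathfrak{P}$ is simply $d_\pi(\mathfrak{P})/m$ where $d_\pi(\mathfrak{P})=\ord_{\mathfrak{P}}(\pi)$. Second, by \eqref{PrhoP} the complex conjugation $\rho$ acts on $S(p)$ with every orbit of size $2$, so $S(p)$ is partitioned into exactly $2^{n-1}$ two-element $\rho$-orbits $\{\mathfrak{P},\rho(\mathfrak{P})\}$. The governing relation throughout is \eqref{mPIrho}, which here reads $d_\pi(\mathfrak{P})+d_\pi(\rho(\mathfrak{P}))=m$. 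I would also use unique factorization of ideals in $\OO_K$ repeatedly, together with \eqref{prodPi}.

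First I would prove (i)$\Leftrightarrow$(iii) by a counting argument carried out orbit by orbit. By definition $\pi$ is ordinary iff each $d_\pi(\mathfrak{P})/m$ is an integer, and since $0\le d_\pi(\mathfrak{P})\le m$ this forces $d_\pi(\mathfrak{P})\in\{0,m\}$ for all $\mathfrak{P}$, i.e. \eqref{ordinary01}. On a single $\rho$-orbit the relation $d_\pi(\mathfrak{P})+d_\pi(\rho(\mathfrak{P}))=m$ (with $m\ge 1$) shows at most one of the two primes can have $d_\pi=m$; hence $\Psi(\pi)$ meets each orbit in at most one point and $\#\Psi(\pi)\le 2^{n-1}$, with equality precisely when every orbit contributes exactly one prime, that is, when $\{d_\pi(\mathfrak{P}),d_\pi(\rho(\mathfrak{P}))\}=\{0,m\}$ on every orbit. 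Thus $\Psi(\pi)$ is a $p$-type (equivalently $\#\Psi(\pi)=2^{n-1}$, the other defining condition of a $p$-type being automatic, since $\mathfrak{P}\in\Psi(\pi)$ gives $d_\pi(\rho(\mathfrak{P}))=0$) if and only if $\pi$ is ordinary.

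Next I would establish (i)$\Rightarrow$(ii) and (ii)$\Rightarrow$(iii). Assuming $\pi$ is ordinary, \eqref{prodPi} collapses to $\pi\OO_K=\prod_{\mathfrak{P}\in\Psi(\pi)}\mathfrak{P}^m=\mathfrak{B}^m$ where $\mathfrak{B}:=\prod_{\mathfrak{P}\in\Psi(\pi)}\mathfrak{P}$; since $\Psi(\pi)$ is a $p$-type we have $S(p)=\Psi(\pi)\sqcup\rho(\Psi(\pi))$, whence $\mathfrak{B}\cdot\rho(\mathfrak{B})=\prod_{\mathfrak{P}\in S(p)}\mathfrak{P}=p\OO_K$ by \eqref{pOK}, proving $\mathfrak{B}\in H(p)$. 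Conversely, given $\mathfrak{B}\in H(p)$ with $\pi\OO_K=\mathfrak{B}^m$, substituting \eqref{HTB} and comparing with \eqref{prodPi} via unique factorization forces $d_\pi(\mathfrak{P})=m$ for $\mathfrak{P}\in\Phi(\mathfrak{B})$ and $d_\pi(\mathfrak{P})=0$ otherwise; hence every slope is $0$ or $1$ (so $\pi$ is ordinary) and $\Psi(\pi)=\Phi(\mathfrak{B})$. The asserted uniqueness and the identification $\Phi(\mathfrak{B})=\Psi(\pi)$ then follow from unique factorization, since $\mathfrak{B}^m$ determines $\mathfrak{B}$ uniquely and the primes dividing $\mathfrak{B}$ are exactly those $\mathfrak{P}$ with $d_\pi(\mathfrak{P})=m$, namely $\Psi(\pi)$.

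I do not expect a serious obstacle; the argument is essentially bookkeeping forced by the single relation $d_\pi(\mathfrak{P})+d_\pi(\rho(\mathfrak{P}))=m$ and unique factorization. The one point requiring care is the nontrivial direction (iii)$\Rightarrow$(i): one must observe that the mere cardinality equality $\#\Psi(\pi)=2^{n-1}$ already forces every orbit to split as $\{0,m\}$, ruling out any orbit with both values strictly between $0$ and $m$. This is exactly where the complete-splitting hypothesis is essential, as it guarantees $2^{n-1}$ genuine two-element $\rho$-orbits via \eqref{PrhoP}.
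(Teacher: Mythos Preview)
Your proposal is correct and follows essentially the same approach as the paper: both arguments reduce to the relation $d_\pi(\mathfrak{P})+d_\pi(\rho(\mathfrak{P}))=m$ on each $\rho$-orbit, identify $\Psi(\pi)$ with the primes where $d_\pi=m$, and pass between (i), (ii), (iii) via unique factorization and the collapse $\pi\OO_K=\prod_{\mathfrak{P}\in\Psi(\pi)}\mathfrak{P}^m$. The only cosmetic difference is that you phrase (i)$\Leftrightarrow$(iii) as an orbit-by-orbit count, whereas the paper instead describes $\rho(\Psi(\pi))=\{\mathfrak{P}:d_\pi(\mathfrak{P})=0\}$ explicitly and checks that $S(p)=\Psi(\pi)\sqcup\rho(\Psi(\pi))$ exactly when all $d_\pi\in\{0,m\}$.
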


\begin{proof}
We have
\begin{equation}
\label{piProd}
\pi\OO_K=\prod_{\mathfrak{P}\in S(p)}\mathfrak{P}^{d(\mathfrak{P})},
\end{equation}
for some $d(\mathfrak{P}) \in \Z_{+}$ such that
\begin{equation}
\label{dSUMrho}
d(\mathfrak{P})+d(\rho(\mathfrak{P}))=m, 
\end{equation}
\begin{equation}
\label{simplification}
\frac{\mathrm{ord}_{\mathfrak{P}}(\pi)}{\mathrm{ord}_{\mathfrak{P}}(q)}=\frac{d(\mathfrak{P})}{m} \   \ \forall  \mathfrak{P}\in S(p).
\end{equation}
This implies that
\begin{equation}
\label{PsiD}
\Psi(\pi):=\{\mathfrak{P}\in S(p)\mid  d(\mathfrak{P})=m\}\subset S(p).
\end{equation}
Combining \eqref{PsiD} with \eqref{dSUMrho}, we obtain that
\begin{equation}
\label{PsiDrho}
\rho(\Psi(\pi)):=\{\mathfrak{P}\in S(p)\mid  d(\mathfrak{P})=0\}=\{\mathfrak{P}\in S(p)\mid   \frac{\mathrm{ord}_{\mathfrak{P}}(\pi)}{\mathrm{ord}_{\mathfrak{P}}(q)}=0\}\subset S(p);
\end{equation}
in particular, the subsets $\Psi(\pi)$ and $\rho(\Psi(\pi))$ do {\sl not meet} each other.
In light of \eqref{PsiPiOrd} and \eqref{PsiDrho} combined with \eqref{ordinary01},
$\pi$ is ordinary if and only if $S(p)$ is a disjoint
union of $\Psi(\pi)$ and $\rho(\Psi(\pi))$, i.e., $\Psi(\pi)$ is a $p$-type. This proves the equivalence of (i) and (iii).
If (i) and (iii) hold then it follows from \eqref{piProd} that
$$\pi \OO_K=\prod_{\mathfrak{P}\in \Psi(\pi)}\mathfrak{P}^m=\mathfrak{B}^m \ \text{ where } \mathfrak{B}:=\prod_{\mathfrak{P}\in \Psi(\pi)}\mathfrak{P}.$$
Since $\Psi(\pi)$ is a $p$-type, $\mathfrak{B}\in H(p)$ and obviously $\Phi(\mathfrak{B})=\Psi(\pi)$. This proves that equivalent (i) and (iii) imply (ii).

Let us assume that (ii) holds. This means that there is $\mathfrak{B} \in H(p)$ that satisfies \eqref{HpPi}.  This implies that
$$\mathfrak{B} =\prod_{\mathfrak{P}\in \Phi(\mathfrak{B})}\mathfrak{P}, \ \pi\OO_K=\mathfrak{B}^m=\prod_{\mathfrak{P}\in \Phi(\mathfrak{B})}\mathfrak{P}^m.$$
It follows that 
$$\frac{\mathrm{ord}_{\mathfrak{P}}(\pi)}{\mathrm{ord}_{\mathfrak{P}}(q)}=1 \ \forall \mathfrak{P}\in \Phi(\mathfrak{B}),$$
$$\frac{\mathrm{ord}_{\mathfrak{P}}(\pi)}{\mathrm{ord}_{\mathfrak{P}}(q)}=0 \ \forall \mathfrak{P}\not\in \Phi(\mathfrak{B}).$$
This implies that $\pi$ is ordinary and therefore (ii) implies (i). So, we have proven the equivalence of (i),(ii), (iii).
The uniqueness of such $\mathfrak{B} $ is obvious.
\end{proof}

\begin{lem}
\label{transH}
The natural action of $G$ on $H(p)$ is free. In particular, $H(p)$ partitions into a disjoint union
of $2^{2^{n-1}-n}$ orbits of $G$, each of which consists of $2^n$ elements.
\end{lem}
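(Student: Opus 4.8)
The plan is to transport the $G$-action from $H(p)$ to the set of $p$-types and exploit the very special structure of the cyclic $2$-group $G$. By construction the map $\mathfrak{B}\mapsto\Phi(\mathfrak{B})$ is a bijection from $H(p)$ onto the set of $p$-types in $S(p)$, with inverse $\Phi\mapsto\prod_{\mathfrak{P}\in\Phi}\mathfrak{P}$ as in \eqref{HTB}; moreover it is $G$-equivariant by \eqref{GaloisH}. Hence the action of $G$ on $H(p)$ is free if and only if the action $\Phi\mapsto\sigma(\Phi)$ on the set of $p$-types is free, and I would reduce the whole statement to this latter claim.

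The core of the argument is a single step. Suppose $\sigma\in G$ fixes a $p$-type $\Phi$, i.e. $\sigma(\Phi)=\Phi$, and suppose for contradiction that $\sigma\ne 1$. Since $G$ is cyclic of order $2^n$, its unique subgroup of order $2$ is $\{1,\rho\}$, and every nontrivial subgroup of $G$ contains $\rho$ (this is exactly the observation already recorded in Subsection \ref{Gideals}). In particular $\rho\in\langle\sigma\rangle$, so $\rho=\sigma^{\,j}$ for some integer $j$. Iterating $\sigma(\Phi)=\Phi$ then gives $\rho(\Phi)=\sigma^{\,j}(\Phi)=\Phi$. But a $p$-type satisfies $\Phi\cap\rho(\Phi)=\varnothing$ and $\#(\Phi)=2^{n-1}\ge 1$, whence $\Phi=\Phi\cap\rho(\Phi)=\varnothing$, a contradiction. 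Therefore $\sigma=1$, and the action of $G$ on the $p$-types — equivalently on $H(p)$ — is free.

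For the ``in particular'' clause, freeness of the $G$-action means that every orbit has exactly $\#(G)=2^n$ elements, so $H(p)$ is a disjoint union of orbits each of size $2^n$. Combining this with the cardinality $\#(H(p))=2^{2^{n-1}}$ from \eqref{cardH}, the number of orbits equals
\[
\frac{\#(H(p))}{\#(G)}=\frac{2^{2^{n-1}}}{2^{n}}=2^{\,2^{n-1}-n},
\]
as asserted.

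I do not expect a genuine obstacle here: the only conceptual ingredient is the elementary fact that in a cyclic $2$-group the unique involution $\rho$ lies in the cyclic span of every nontrivial element, which clashes with the disjointness condition defining a $p$-type. The remaining care is purely bookkeeping, namely checking that ``$\sigma$ fixes $\mathfrak{B}$'' is equivalent to ``$\sigma$ fixes $\Phi(\mathfrak{B})$'', which is immediate from the uniqueness of $\Phi(\mathfrak{B})$ together with \eqref{GaloisH}.
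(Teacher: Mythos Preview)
Your argument is correct and follows essentially the same route as the paper: both proofs use that any nontrivial stabilizer in the cyclic $2$-group $G$ must contain $\rho$, and then derive a contradiction from $\rho$ fixing the object in question. The only cosmetic difference is that the paper works directly with the ideal $\mathfrak{B}$ (obtaining $p\OO_K=\mathfrak{B}^2$, which contradicts unramifiedness) rather than passing through the bijection with $p$-types, but the two contradictions are equivalent and the orbit count is identical.
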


\begin{proof}
Suppose that there exists $\mathfrak{B}\in H(p)$ such that its stabilizer 
$$G_{\mathfrak{B}}=\{\sigma \in G\mid \sigma(\mathfrak{B})=\mathfrak{B}\}$$
is a nontrivial subgroup. Then $G_{\mathfrak{B}}$ must contain $\rho$, thanks to the arguments of Subsection \ref{Gideals}. This means that
$\rho(\mathfrak{B})=\mathfrak{B}$ and therefore
$$p\cdot \OO_K=\mathfrak{B}\cdot \rho(\mathfrak{B})=\mathfrak{B}^2,$$
which is not true, since $p$ is unramified in $K$. The obtained contradiction proves that the action of $G$ on $H(p)$ is free.
Hence, every $G$-orbit in $H(p)$ consists of $\#(G)=2^n$ elements and the number of such orbits is
$$\frac{\#(H(p))}{\#(G)}=\frac{2^{2^{n-1}}}{2^n}=2^{2^{n-1}-n}.$$
\end{proof}

In what follows we define (non-canonically) certain $G$-equivariant injective maps $\mathcal{Z}$,  $\Pi$ and $\Pi_1$ from $H(p)$ to $K$;  they will play a crucial role in the classification of ordinary Weil's numbers in $K$ up to equivalence.

\begin{cor}
\label{gen}
Let $c=\exp(K)$. Then there exists a $G$-equivariant map
\begin{equation}
\label{generator}
\mathcal{Z}: H(p) \hookrightarrow \OO_K \setminus\{0\}\subset \OO_K\subset K
\end{equation}
such that $\mathcal{Z}(\mathfrak{B})$ is a generator of  $\mathfrak{B}$
for all $\mathfrak{B}\in H(p)$.
\end{cor}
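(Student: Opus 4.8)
The plan is to build $\mathcal{Z}$ one $G$-orbit at a time, using the free action of $G$ on $H(p)$ furnished by Lemma~\ref{transH} together with the defining property of $c=\exp(K)$. The one genuinely arithmetic ingredient is that $c$ is the exponent of $\mathrm{Cl}(K)$: this forces $\mathfrak{B}^{c}$ to be \emph{principal} for every $\mathfrak{B}\in H(p)$, and it is exactly this that guarantees a generator exists and is the role played by $c$ in the statement (so the element $\mathcal{Z}(\mathfrak{B})$ produced will generate the principal ideal $\mathfrak{B}^{c}$).

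First I would apply Lemma~\ref{transH} to write $H(p)$ as a disjoint union of $N=2^{2^{n-1}-n}$ free $G$-orbits and fix, once and for all, one representative $\mathfrak{B}_{1},\dots,\mathfrak{B}_{N}$ in each orbit. For each $i$ I would choose, non-canonically, a generator $z_{i}\in\OO_K\setminus\{0\}$ of the principal ideal $\mathfrak{B}_{i}^{\,c}$. Since the action is free, every ideal in $H(p)$ is $\sigma\mathfrak{B}_{i}$ for a \emph{unique} pair $(i,\sigma)$ with $\sigma\in G$, so I may set, without ambiguity,
\[
\mathcal{Z}(\sigma\mathfrak{B}_{i}):=\sigma(z_{i}).
\]

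The remaining verifications are then purely formal. For $G$-equivariance, given $\tau\in G$ one has $\tau(\sigma\mathfrak{B}_{i})=(\tau\sigma)\mathfrak{B}_{i}$, so $\mathcal{Z}(\tau(\sigma\mathfrak{B}_{i}))=(\tau\sigma)(z_{i})=\tau(\sigma(z_{i}))=\tau(\mathcal{Z}(\sigma\mathfrak{B}_{i}))$. Applying $\sigma$ to $z_{i}\OO_K=\mathfrak{B}_{i}^{\,c}$ gives the generator property $\mathcal{Z}(\sigma\mathfrak{B}_{i})\OO_K=\sigma(z_{i})\OO_K=(\sigma\mathfrak{B}_{i})^{c}$, and in particular $\mathcal{Z}(\mathfrak{B})\neq 0$. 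Finally, injectivity holds because an element generates at most one ideal: if $\mathcal{Z}(\mathfrak{B})=\mathcal{Z}(\mathfrak{B}')$ then $\mathfrak{B}^{c}=(\mathfrak{B}')^{c}$, and since each ideal in $H(p)$ is the squarefree product $\prod_{\mathfrak{P}\in\Phi(\mathfrak{B})}\mathfrak{P}$ of primes above $p$, comparing $\mathrm{ord}_{\mathfrak{P}}$-valuations forces $\Phi(\mathfrak{B})=\Phi(\mathfrak{B}')$, i.e.\ $\mathfrak{B}=\mathfrak{B}'$.

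The only point of real substance, as opposed to bookkeeping, is the choice $c=\exp(K)$: it is precisely the exponent making every $\mathfrak{B}^{c}$ principal, so that generators exist at all. The freeness of the $G$-action in Lemma~\ref{transH} is what makes the orbit-by-orbit prescription simultaneously well defined and $G$-equivariant; had some nontrivial $\sigma\in G$ fixed $\mathfrak{B}_{i}$, it would necessarily contain $\rho$, and the assignment $\mathcal{Z}(\sigma\mathfrak{B}_{i})=\sigma(z_{i})$ could then be inconsistent unless $z_{i}$ were chosen $\sigma$-invariant. The construction is visibly non-canonical, depending on the chosen orbit representatives and generators $z_{i}$, which is exactly why the finer maps $\Pi$ and $\Pi_{1}$ are introduced afterwards to normalize these choices into genuine Weil numbers.
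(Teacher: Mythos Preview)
Your proposal is correct and follows essentially the same approach as the paper: decompose $H(p)$ into free $G$-orbits via Lemma~\ref{transH}, choose a generator $z_i$ of $\mathfrak{B}_i^{\,c}$ for one representative in each orbit (this is where $c=\exp(K)$ enters), and transport by $G$. You have in fact supplied more detail than the paper, which does not explicitly verify injectivity or $G$-equivariance; you also correctly note that $\mathcal{Z}(\mathfrak{B})$ generates $\mathfrak{B}^{c}$ rather than $\mathfrak{B}$ itself, matching how $\mathcal{Z}$ is actually used in the proof of Theorem~\ref{HpOrdinary}.
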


\begin{proof}
We define $\mathcal{Z}$ separately for each $G$-orbit $O\subset H(p)$.
Pick $\mathfrak{B}_O \in O$ and choose a generator $z_O$ of the principal ideal $\mathfrak{B}_O^c$.
In light of Lemma \ref{transH}, if $\mathfrak{B} \in O$ then there is precisely one $\sigma \in G$ such that
$\mathfrak{B}=\sigma(\mathfrak{B}_O)$. This implies that
$$\mathfrak{B}^c=\sigma(\mathfrak{B}_O)^c=\sigma(\mathfrak{B}_O^c)=\sigma(z_O)\OO_K,$$
i.e., $g(z_O)$ is a generator of $\mathfrak{B}^c$. It remains to put
$$\mathcal{Z}(\mathfrak{B}):=\sigma(z_O).$$
\end{proof}

\begin{thm}
\label{HpOrdinary}
Let us put
$$c:=\exp(K), \ q:=p^c.$$
Let $K_0=K^{\rho}$ be the maximal totally real subfield of $K$.
\begin{enumerate}
\item[(1)]
There exists an injective 
map
\begin{equation}
\label{theMaP}
\Pi: H(p) \hookrightarrow W(q^2,K),  \ \mathfrak{B} \mapsto \Pi(\mathfrak{B})
\end{equation}
that enjoys the following properties.

\begin{itemize}
\item[(0)] $\Pi$ is $G$-equivariant, i.e.,
$$\Pi(\sigma(\mathfrak{B}))=\sigma(\Pi(\mathfrak{B})) \ \forall \sigma\in G, \mathfrak{B}\in H(p).$$
\item[(i)] For all $\mathfrak{B} \in H(p)$ the ideal $\Pi(\mathfrak{B})\OO_K$ coincides with $\mathfrak{B}^{2 c}$.
\item[(ii)]
The image $\Pi(H(p))$ consists of ordinary Weil's $q^2$-numbers.
\item[(iii)]
If $\pi^{\prime}$ is an ordinary Weil's $p^m$-number in $K$ then there exists prcisely one $ \mathfrak{B} \in H(p)$
such that  the ratio $(\pi^{\prime})^{2c}/\Pi(\mathfrak{B})^m$ is a root of unity.
\item[(iv)]
Let  $\mathfrak{B}_1,  \mathfrak{B}_2 \in H(p)$. Then Weil's $q^2$-numbers $\Pi(\mathfrak{B}_1)$ and $\Pi(\mathfrak{B}_2)$
are equivalent if and only if $\mathfrak{B}_1$ and  $\mathfrak{B}_2$ lie in the same $G$-orbit.
\item[(v)]
If $h$ is a positive integer then the subfield $\Q\left(\Pi(\mathfrak{B})^h\right)$ of $K$ generated by $\Pi(\mathfrak{B})^h$ coincides with $K$.
\end{itemize}
\item[(vi)]
Suppose that every totally positive unit in $\UU_{K_0}$ is a square in $K_0$ (anf therefore
in $\UU_{K_0}$).
Then there exists a map
$$\Pi_0: H(p) \to W(q,K)$$ that enjoys the following properties.

\begin{itemize}
\item[(a)]
$\Pi_0(\mathfrak{B})^2=\Pi(\mathfrak{B})$ for all $\mathfrak{B}$.
\item[(b)]
$\Pi_0$ is $G$-equivariant ``up to sign'', i.e.,
$$\Pi_0(\sigma(\mathfrak{B}))=\pm \sigma(\Pi_0(\mathfrak{B})) \ \forall \sigma\in G, \mathfrak{B}\in H(p).$$
\item[(c)]
If $h$ is a positive integer then the subfield $\Q\left(\Pi_0(\mathfrak{B})^h\right)$ of $K$ generated by $\Pi(\mathfrak{B})^h$ coincides with $K$
\item[(d)] $\Pi_0(\mathfrak{B})$ is an ordinary Weil's $q$-number for all $\mathfrak{B}\in H(p)$.
\end{itemize}
\end{enumerate}
\end{thm}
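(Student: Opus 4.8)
The plan is to build every map from the $G$-equivariant generator map $\mathcal{Z}$ of Corollary \ref{gen}. Writing $z := \mathcal{Z}(\mathfrak{B})$, so that $z$ generates $\mathfrak{B}^c$, the key preliminary observation is that $z\,\rho(z)$ generates $\mathfrak{B}^c\rho(\mathfrak{B})^c=(\mathfrak{B}\rho(\mathfrak{B}))^c=(p\OO_K)^c=q\OO_K$; hence $u_{\mathfrak{B}} := z\,\rho(z)/q$ is a unit, it lies in $K_0=K^{\rho}$ (being $\rho$-fixed), and it is totally positive, since under any embedding $\sigma$ one has $\sigma(z\,\rho(z))=|\sigma(z)|^2>0$. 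I would then define
\[
\Pi(\mathfrak{B}) := \frac{q\,z}{\rho(z)} = \frac{z^2}{u_{\mathfrak{B}}},
\]
the equality of the two expressions coming from $z\,\rho(z)=u_{\mathfrak{B}}\,q$. This second form is what will make part (vi) transparent.

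For part (1), properties (0)--(ii), injectivity, and (v) are then essentially bookkeeping. A direct ideal computation gives $\Pi(\mathfrak{B})\OO_K=q\OO_K\cdot\mathfrak{B}^c/\rho(\mathfrak{B})^c=\mathfrak{B}^{2c}$, which is property (i); this shows simultaneously that $\Pi(\mathfrak{B})\in\OO_K$, and together with $\Pi(\mathfrak{B})\,\rho(\Pi(\mathfrak{B}))=q^2$ (immediate from the formula) it confirms, via Remark \ref{useful}, that $\Pi(\mathfrak{B})$ is a Weil $q^2$-number; ordinariness (property (ii)) then follows from Lemma \ref{ordinaryPhi} with $m=2c$. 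Property (0) follows from the $G$-equivariance of $\mathcal{Z}$ and commutativity of $G$ (so $\rho\sigma=\sigma\rho$), and injectivity follows from (i) by unique factorization. For (v), were $\Q(\Pi(\mathfrak{B})^h)$ proper it would be totally real by Subsection \ref{Gideals}, forcing $\rho(\Pi(\mathfrak{B})^h)=\Pi(\mathfrak{B})^h$; combined with $\Pi(\mathfrak{B})\,\rho(\Pi(\mathfrak{B}))=q^2$ this gives $\Pi(\mathfrak{B})^h=\pm q^h$, whence $\mathfrak{B}^{2ch}=(\mathfrak{B}\rho(\mathfrak{B}))^{ch}$ and $\mathfrak{B}=\rho(\mathfrak{B})$, contradicting freeness of the $G$-action (Lemma \ref{transH}, Subsection \ref{freeGaction}).

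The substantive steps are (iii) and (iv), both resting on Kronecker's theorem in the form of Remark \ref{kron}. For (iii), given an ordinary Weil $p^m$-number $\pi'$, Lemma \ref{ordinaryPhi} supplies a unique $\mathfrak{B}\in H(p)$ with $\pi'\OO_K=\mathfrak{B}^m$; then $(\pi')^{2c}$ and $\Pi(\mathfrak{B})^m$ both generate $\mathfrak{B}^{2cm}$ and both have archimedean absolute value $p^{cm}$, so their ratio is a root of unity, and uniqueness of $\mathfrak{B}$ follows because two admissible ideals would have equal $2cm$-th powers. For (iv), the forward direction is $G$-equivariance; conversely, if $\Pi(\mathfrak{B}_1)^a=\sigma(\Pi(\mathfrak{B}_2))^b$ for some $\sigma\in G$, comparing absolute values forces $a=b$, and passing to ideals gives $\mathfrak{B}_1^{2ca}=\sigma(\mathfrak{B}_2)^{2ca}$, hence $\mathfrak{B}_1=\sigma(\mathfrak{B}_2)$.

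Finally, part (vi) exploits the formula $\Pi(\mathfrak{B})=z^2/u_{\mathfrak{B}}$: since $u_{\mathfrak{B}}$ is a totally positive unit in $K_0$, the hypothesis lets me write $u_{\mathfrak{B}}=s^2$ with $s\in\UU_{K_0}$, and I set $\Pi_0(\mathfrak{B}):=z/s$, so that $\Pi_0(\mathfrak{B})^2=\Pi(\mathfrak{B})$ (property (a)). Then $\Pi_0(\mathfrak{B})\in\OO_K$ generates $\mathfrak{B}^c$, and $\Pi_0(\mathfrak{B})\,\rho(\Pi_0(\mathfrak{B}))$ is a totally positive square root of $q^2$, hence equals $q$, so $\Pi_0(\mathfrak{B})$ is a Weil $q$-number, ordinary by Lemma \ref{ordinaryPhi} (property (d)). Property (b) is forced by (a), since squaring reduces the desired identity to the $G$-equivariance of $\Pi$, leaving only the sign ambiguity in the choice of $s$; and property (c) follows from (v) via $K=\Q(\Pi(\mathfrak{B})^h)=\Q(\Pi_0(\mathfrak{B})^{2h})\subseteq\Q(\Pi_0(\mathfrak{B})^h)\subseteq K$. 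The main obstacle is conceptual rather than computational: recognizing that the natural Weil number $q\,z/\rho(z)$ attached to $\mathfrak{B}$ rewrites as $z^2/u_{\mathfrak{B}}$ with $u_{\mathfrak{B}}$ totally positive, which is exactly the unit whose square-ness the signature hypothesis of (vi) is designed to guarantee; after that, the remaining work is careful tracking of ideals and archimedean weights.
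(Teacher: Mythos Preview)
Your proof is correct and follows essentially the same approach as the paper: you build $\Pi(\mathfrak{B})=qz/\rho(z)=z^2/u_{\mathfrak{B}}$ from the $G$-equivariant generator map $\mathcal{Z}$ of Corollary~\ref{gen}, then take $\Pi_0(\mathfrak{B})=z/s$ with $s^2=u_{\mathfrak{B}}$, exactly as the paper does. The only differences are cosmetic (you invoke Lemma~\ref{ordinaryPhi} explicitly for ordinariness and extract $a=b$ in (iv) via absolute values, whereas the paper compares ideals directly without needing $a=b$), and your arguments for (v) and (vi)(c)--(d) are slight rephrasings of the paper's.
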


\begin{proof}
Let us choose
 $\mathcal{Z}: H(p) \to \OO_E \setminus\{0\}$ that enjoys the properties described in Corollary \ref{gen}.
Let $\mathfrak{B}\in H(p)$. In order to define $\Pi(\mathfrak{B})$, notice that
$$\mathfrak{B}\cdot \rho(\mathfrak{B})=p\OO_K, \  \mathfrak{B}^c=z \OO_K$$
where 
\begin{equation}
\label{zZB}
z=\mathcal{Z}(\mathfrak{B}) \in \OO_K\setminus \{0\}.
\end{equation} 
Then
$z\rho(z)$ is a generator of the ideal
$$\mathfrak{B}^c\cdot \rho(\mathfrak{B}^c)=\left(\mathfrak{B}\cdot \rho(\mathfrak{B})\right)^c=p^c\cdot\OO_K=q\OO_K.$$
Since $\rho$ is the complex conjugation, $z\rho(z)$ is a real (i.e., $\rho$-invariant) totally positive element of $\OO_K$.  Clearly,
$$u:=\frac{z\rho(z)}{q}$$
is an invertible element of $\OO_K$ that is also $\rho$-invariant and totally positive unit in $\UU_{K_0}$. Obviously,
$$q=\frac{z\cdot \rho(z)}{u}.$$
Now let us put
\begin{equation}
\label{WeilQ}
\Pi(\mathfrak{B}):=q \cdot \frac{z}{\rho(z)}=\frac{z^2}{z\rho(z)/q}=\frac{z^2}{u} \in \OO_K.
\end{equation}
If $u$ is a square in $K_0$ then there is a unit $u_0$ in $K_0$ such that $u=u_0^2$. If this is the case then let us put
\begin{equation}
\label{WeilQ1}
\Pi_0(\mathfrak{B}):=\frac{z}{u_0}\in \OO_K \ \text{ and get } \Pi_0(\mathfrak{B})^2=\left(\frac{z}{u_0}\right)^2=\frac{z^2}{u}=\Pi(\mathfrak{B}).
\end{equation}
Clearly,
\begin{equation}
\label{idealPi}
\Pi(\mathfrak{B})\cdot \OO_K=z^2\cdot \OO_K=(z\cdot\OO_K)^2=(\mathfrak{B}^{c})^2=\mathfrak{B}^{2c},
\end{equation}
which proves (i).  In order to check that $\Pi(\mathfrak{B})$ is a Weil's $q^2$-number, notice that
$$\Pi(\mathfrak{B})\cdot  \rho(\Pi(\mathfrak{B}))=q \cdot \frac{z}{\rho(z)}\cdot \rho\left(q \cdot \frac{z}{\rho(z)}\right)=q^2\cdot  \frac{z}{\rho(z)}
\cdot \frac{\rho(z)}{z}=q^2.$$
In light of Remark \ref{useful}, this proves that $\Pi(\mathfrak{B})$ is a Weil's $q^2$-number. It follows from \eqref{WeilQ1} that if $\Pi_0(\mathfrak{B})$ is defined then it is a Weil's $q$-number.
 By construction,
$$\Pi(\mathfrak{B})\OO_K=\mathfrak{B}^{2c},$$
which also implies that $\Pi(\mathfrak{B})$ is $p^{2c}=q^2$-ordinary Weil's number. The $G$-invariance of $\mathcal{Z}$ (see Corollary \ref{gen})
combined with \eqref{zZB} and \eqref{WeilQ} implies the $G$-equivariance of $\Pi$, which proves (0). The injectiveness of $\Pi$ follows from \eqref{idealPi}. This proves (i) and (ii).

In order to prove (v), notice that if $\Q\left(\Pi(\mathfrak{B})^h\right)$ does {\sl not} coincide with $K$ then it consists of $\rho$-invariants (Subsection \ref{Gideals}). In particular, the ideal
$\Pi(\mathfrak{B})^h\OO_K=\mathfrak{B}^{2ch}$ coincides with its complex-conjugate
$$\rho\left(\Pi(\mathfrak{B})^h\OO_K\right)=\rho\left(\mathfrak{B}^{2ch}\right)=\rho(\mathfrak{B})^{2ch}.$$
This implies that  $\mathfrak{B}=\rho(\mathfrak{B})$, which is not the case, since $\mathfrak{B}\in H(p)$. The obtained contradiction proves (v).

In order to prove (iii), we need to check that if $\pi^{\prime}$ is an ordinary Weil's $p^m$-number in $K$ then it is equivalent to $\Pi(\mathfrak{B})$ for some $\mathfrak{B}\in H(p)$.
In order to do that, let us consider the ideal
$\mathfrak{M}:=\pi^{\prime}\OO_K$ in $\OO_K$. Since $\pi^{\prime}\cdot\rho(\pi^{\prime})=p^m$, we get
$\mathfrak{M}\cdot \rho(\mathfrak{M})=p^m \OO_K$. It follows that
$$\mathfrak{M}=\prod_{\mathfrak{P}\in S(p)}\mathfrak{P}^{d(\mathfrak{P})}, \ d(\mathfrak{P})+d(\mathfrak{\rho(P)})=m \ \forall \mathfrak{P}\in S(p).$$
The ordinarity of $\mathfrak{M}$ implies that 
$$d(\mathfrak{P})=0 \ \text{ or } m \ \forall \mathfrak{P}\in S(p).$$
This implies that if we put
$$\Phi=\{\mathfrak{P}\in S(p)\mid d(\mathfrak{P})=m\} \subset S(p)$$
then $\Phi$ is a $p$-type and
$$\mathfrak{M}=\prod_{\mathfrak{P}\in \Phi}\mathfrak{P}^m=\left(\prod_{\mathfrak{P}\in \Phi}\mathfrak{P}\right)^m.$$
It is also clear that 
$$\mathfrak{B}:=\prod_{\mathfrak{P}\in \Phi}\mathfrak{P}\in H(p), \  $$
and
$$(\pi^{\prime})^{2c}\OO_K=\mathfrak{M}^{2c}=\mathfrak{B}^{2cm}=\left(\mathfrak{B}^{2c}\right)^m=\left(\Pi((\mathfrak{B})\OO_K\right)^m=\Pi\left(\mathfrak{B}^m\right)\OO_K.$$
It follows from Remark \ref{kron} that the ratio  $\Pi(\mathfrak{B})^m /(\pi^{\prime})^{2c}$ is a root of unity. The uniqueness follows from the already
proven (i).

Let us prove (iv). The already proven (0)  tells us that  if $\mathfrak{B}_2 =\sigma(\mathfrak{B}_2)$
for $\sigma \in G$ then $\Pi(\mathfrak{B}_2)=\sigma(\Pi(\mathfrak{B}_1)$ and therefore Weil's numbers 
$\Pi(\mathfrak{B}_1)$ and $\Pi(\mathfrak{B}_2)$ are equivalent.  

Conversely, suppose that $\Pi(\mathfrak{B}_1)$ and $\Pi(\mathfrak{B}_2)$ are equivalent. This means that there are positive integers $a,b$, a Galois automorphism
$\sigma \in G$, and a root of unity $\zeta \in \mu_K$ such that
$$\Pi(\mathfrak{B}_2)^a=\zeta \cdot  \sigma(\Pi(\mathfrak{B}_1))^b.$$
This implies the equality of the corresponding ideals in $\OO_K$:
$$\Pi(\mathfrak{B}_2)^a\OO_K=\sigma(\Pi(\mathfrak{B}_1))^b\OO_K=\Pi(\sigma(\mathfrak{B}_1))^b.$$
This means (in light of already proven (i)) that
$$\mathfrak{B}_2^{2 ca}=(\sigma(\mathfrak{B}_1))^{2cb},$$
which implies $\mathfrak{B}_2=\sigma(\mathfrak{B}_1)$. Hence  $\mathfrak{B}_1$ and  $\mathfrak{B}_2$ lie in the same $G$-orbit.

Let us prove (vi). Actually, we have already constructed the map
$\Pi_0: H(p) \to \OO_K$, checked that its image lies in $W(q,K)$; we have also proven property (vi)(a). 
As for (vi)(b), it follows readily from \eqref{WeilQ1} combined with the $G$-equivariance of $\Pi$.  As for (vi)((c), it follows readily from (v) combined with \eqref{WeilQ1}. 
In order to prove (vi)(d), it suffices to recall that $\Pi(\mathfrak{B})$ is an ordinary Weil's $q^2$-number and notice that in light of \eqref{WeilQ1}, the integer
$$\frac{\ord_{\mathfrak{P}}(\Pi(\mathfrak{B})}{\ord_{\mathfrak{P}}(q^2)}=
\frac{2\ord_{\mathfrak{P}}(\Pi_0(\mathfrak{B})}{2\ord_{\mathfrak{P}}(q)}=\frac{\ord_{\mathfrak{P}}(\Pi_0(\mathfrak{B})}{\ord_{\mathfrak{P}}(q)}.$$

\end{proof}

\section{Abelian varieties with Weil's numbers in $K$}
\label{WeilKK}
As above, $p$ is a prime, $m$ a positive integer and $q=p^m$.

\begin{thm}
\label{ordinaryAV}
Let $A$ be a simple abelian variety over $k=\F_q$ such that the corresponding Weil's $q$-number
$$\pi_A \in K.$$
Let $\Q(\pi_A)$ be the subfield of $K$ generated by $\pi_A$.

\begin{itemize}
\item[(i)]
Suppose that either $\Q(\pi_A)\ne K$ or $p$ does not split completely in $K$.

 Then $A$ is supersingular.
\item[(ii)]
If $p$ splits completely in $K$, $\Q(\pi_A)=K$ and $\pi_A$ is not ordinary then
the division $\Q$-algebra
$\End_k^{0}(A)$
is not commutative.
\item[(iii)]
If $\pi_A$ is ordinary then $K=\Q(\pi_A)$,
and $\End_k^{0}(A) \cong K$; in particular,  $\End_k^{0}(A)$  is commutative.
\end{itemize}
\end{thm}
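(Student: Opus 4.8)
The plan is to combine the $p$-adic slope computations of the previous two lemmas with the Honda--Tate description of the endomorphism algebra of a simple abelian variety over a finite field \cite{T}. Recall that for a simple $A/\F_q$ with Weil number $\pi_A$, the algebra $\End_k^0(A)$ is a central division algebra over its center $E:=\Q(\pi_A)$, whose local invariant is $1/2$ at every real place of $E$, is $0$ at every finite place not above $p$, and at a place $v\mid p$ equals
$$\frac{\ord_v(\pi_A)}{\ord_v(q)}\,[E_v:\Q_p]\pmod{\Z},$$
while the $p$-adic Newton polygon of $A$ has slopes $\ord_v(\pi_A)/\ord_v(q)$, as $v$ runs through the places of $E$ above $p$, with suitable multiplicities. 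I would read off (i), (ii), (iii) from these facts, the arithmetic input being supplied entirely by Lemmas \ref{nonsplit} and \ref{PsplitK}.

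For (i) I would show that in both excluded cases every $p$-adic slope of $A$ equals $1/2$, so the Newton polygon is a single segment of slope $1/2$ and $A$ is supersingular. If $p$ does not split completely in $K$, then Lemma \ref{nonsplit} gives that $\pi_A^2/q$ is a root of unity; if instead $p$ splits completely but $\Q(\pi_A)\ne K$, then Lemma \ref{PsplitK} gives $\pi_A=\pm p^{m/2}$, whence again $\pi_A^2/q=1$ is a root of unity. In either case a root of unity is a unit at every finite place, so $\ord_v(\pi_A^2)=\ord_v(q)$ for all $v\mid p$, forcing each slope $\ord_v(\pi_A)/\ord_v(q)$ to be $1/2$.

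For (ii) and (iii) the standing hypothesis is that $p$ splits completely in $K$, so $K/\Q$ is unramified at $p$ and, since complete splitting means $\#S(p)=[K:\Q]$, every local degree $[K_{\mathfrak P}:\Q_p]$ ($\mathfrak P\in S(p)$) is $1$, i.e. $K_{\mathfrak P}=\Q_p$; moreover $K$ is CM, so $E\subseteq K$ has no real places. In (iii) I would first deduce $K=\Q(\pi_A)$: otherwise Lemma \ref{PsplitK} would give $\pi_A=\pm p^{m/2}$ and hence slope $\ord_{\mathfrak P}(\pi_A)/\ord_{\mathfrak P}(q)=1/2$ at each $\mathfrak P\in S(p)$, contradicting ordinarity (for which the slopes are integers, so $0$ or $1$ by \eqref{ordinary01}). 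With $E=\Q(\pi_A)=K$ the invariant at $v=\mathfrak P\mid p$ is simply $\ord_{\mathfrak P}(\pi_A)/\ord_{\mathfrak P}(q)\bmod\Z$, the local degree being $1$. In (iii) all these slopes are $0$ or $1$, so every invariant vanishes, the division algebra is split, and $\End_k^0(A)$ equals its center $K$, in particular commutative. In (ii), non-ordinarity means some slope $\ord_{\mathfrak P}(\pi_A)/\ord_{\mathfrak P}(q)$ is a non-integer in $(0,1)$; then the corresponding invariant is nonzero in $\Q/\Z$, so $\End_k^0(A)$ is a division algebra of dimension $>1$ over its center, hence non-commutative.

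The bookkeeping is routine; the step demanding the most care is the invariant computation, specifically verifying that complete splitting of $p$ makes $[K_{\mathfrak P}:\Q_p]=1$, so that the local invariant is exactly the slope with no correction factor. This is precisely what lets me translate ``$\pi_A$ ordinary'' into ``all invariants vanish'' and ``$\pi_A$ not ordinary'' into ``some invariant is nonzero.'' The only nonformal ingredient I invoke from outside the excerpt is the standard equivalence between the Newton polygon being a single slope $1/2$ and supersingularity, used in (i).
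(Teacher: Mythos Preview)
Your proposal is correct and follows essentially the same route as the paper: part (i) is derived from Lemmas~\ref{nonsplit} and~\ref{PsplitK} exactly as you do, and parts (ii)--(iii) come from Tate's local-invariant formula with the simplification $[K_{\mathfrak P}:\Q_p]=1$ afforded by complete splitting. The only cosmetic difference is that for (i) the paper stops at ``$\pi_A^2/q$ is a root of unity, hence $A$ is supersingular,'' whereas you unpack this via the Newton polygon slopes; and for the $\Q(\pi_A)=K$ step in (iii) the paper cites the totally-real-subfield observation of Subsection~\ref{Gideals} directly rather than routing through Lemma~\ref{PsplitK}, but the content is the same.
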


\begin{proof}

(i) It follows from   Lemmas \ref{nonsplit} and \ref{PsplitK} that  $\pi_A^2/q$ is a root of unity. This means that $A$ is supersingular.

(ii-iii) Recall \cite{T66,T} that  $E:=\End_k^{0}(A)$  is a {\sl central} division algebra  over the field $\Q(\pi_A)=K$.
 Since $p$ splits completely in $K$,
the $\mathfrak{P}$-adic completion $K_{\mathfrak{P}}$ of $K$ coincides with $\Q_p$, i.e.,
$$[K_{\mathfrak{P}}:\Q_p]=1 \ \forall \mathfrak{P} \in S(p).$$
By \cite[Th. 1]{T}, the local $\mathfrak{P}$-adic invariant 
$$\mathrm{inv}_{\mathfrak{P}}(E)\in \Q/\Z$$
 of the central division $K$-algebra $E$
  is given by the formula 
\begin{equation}
\label{localInv}
\mathrm{inv}_{\mathfrak{P}}(E)= \frac{\mathrm{ord}_{\mathfrak{P}}(\pi_A)}{\mathrm{ord}_{\mathfrak{P}}(q)}[K_{\mathfrak{P}}:\Q_p]\bmod \Z= 
 \frac{\mathrm{ord}_{\mathfrak{P}}(\pi_A)}{\mathrm{ord}_{\mathfrak{P}}(q)}\bmod \Z\in \Q/\Z.
 \end{equation}
 All other local invariants of $E$ (outside $S(p)$)
  are $0$ (ibid).
 
Suppose that  $\pi_A$ is  {\sl ordinary}.  Then $\Q(\pi_A)=K$, because otherwise $\Q(\pi_A)\subset \R$ and therefore $\pi_A$ is  {\sl real}, i.e., $A$ is {\sl supersingular} \cite[Examples]{T},
which is not the case. Since  $\pi_A$ is  ordinary,
all  the {\sl slopes} $\mathrm{ord}_{\mathfrak{P}}(\pi_A)/\mathrm{ord}_{\mathfrak{P}}(q)$ are {\sl integers}
and therefore $\mathrm{inv}_{\mathfrak{P}}(E)=0$ for all $\mathfrak{P}\in S(p)$. This implies that the division algebra $E=\End_k^{0}(A)$  is actually a {\sl field},
i.e., is isomorphic to $K$. This proves (iii).

In order to prove (ii), assume  that   $\pi_A$ is  {\sl not ordinary}. Then there is a maximal ideal 
$\mathfrak{P} \in S(p)$ such that the ratio
$\mathrm{ord}_{\mathfrak{P}}(\pi_A)\mathrm{ord}_{\mathfrak{P}}(q)$ is {\sl not an integer}, i.e.
\begin{equation}
\label{modZ}
\frac{\mathrm{ord}_{\mathfrak{P}}(\pi_A)}{\mathrm{ord}_{\mathfrak{P}}(q)}\bmod \Z \ne 0 \ \text{ in  }
\Q/\Z.
\end{equation}
Combining  \eqref{modZ} with \eqref{localInv}, we obtain that
$\mathrm{inv}_{\mathfrak{P}}(E) \ne 0.$
 It follows that $E=\End_k^{0}(A)$ does {\sl not} coincide with its center, i.e., is {\sl noncommutative}. This proves (ii).
\end{proof}

\begin{rem}
\label{ordinarySimple}
Let $A$ be a simple abelian variety over $\F_q$ such that $\pi_A \in K$.
Obviously, $A$ is ordinary if and only if $\pi_A$ is ordinary.
\end{rem}

\section{Honda-Tate theory for ordinary abelian varieties}
\label{HondaTate}

As above, $p$ is a prime that splits completely in $K$, $m$ a positive integer and $q=p^m$.

Let $\pi \in K$ be a Weil's $q$-number. The Honda-Tate theory \cite{T66,Honda,T} attaches to $\pi$ a {\sl simple} abelian variety  $\mathcal{A}$ over $\F_{q}$ that is defined up to an $\F_{q}$-isogeny 
and enjoys the following properties.

Let $\mathrm{Fr}_{\mathcal{A}}: \mathcal{A}\to \mathcal{A}$ be the Frobenius endomorphism of $\mathcal{A}$ and
$F:=\Q[\mathrm{Fr}_{\mathcal{A}}]$ be the $\Q$-subalgebra of the division $\Q$-algebra  $E:=\End_{\F_{q}}^{0}(\mathcal{A})$
(which is actually a subfield). Then $F$ is the {\sl center} of $E$ and
there is a field embedding
$$i: F \hookrightarrow \C \ \text{ such that } \ i(\mathrm{Fr}_{\mathcal{A}})=\pi.$$

\begin{lem}
\label{HTlemma}
 Suppose $\pi$ is ordinary  and $\Q(\pi^h)=K$ for all positive integers $h$. Then $\mathcal{A}$ is an absolutely simple $2^{n-1}$-dimensional  ordinary abelian variety,  $\End^0(A) \cong K$,  and all  endomorphisms of $\mathcal{A}$ are defined over $\F_{q}$.
\end{lem}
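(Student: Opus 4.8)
The plan is to read off the statements over $\F_q$ directly from the Honda--Tate dictionary, and then to gain geometric control by base-changing $\mathcal{A}$ to every finite extension $\F_{q^h}$. Taking $h=1$ in the hypothesis gives $\Q(\pi)=K$, so the center $F=\Q[\mathrm{Fr}_{\mathcal{A}}]$ of $E=\End^0_{\F_q}(\mathcal{A})$ is, via $i$, exactly $K$. Since $\pi$ is ordinary, Theorem~\ref{ordinaryAV}(iii) gives $\End^0_{\F_q}(\mathcal{A})\cong K$, and Remark~\ref{ordinarySimple} shows $\mathcal{A}$ is ordinary. As $\End^0_{\F_q}(\mathcal{A})\cong K$ is a commutative semisimple $\Q$-algebra of dimension $2^n$, the Grothendieck--Tate formula recalled in Subsection~\ref{zero} yields $2\dim\mathcal{A}=[K:\Q]=2^n$, hence $\dim\mathcal{A}=2^{n-1}$.

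For the geometric assertions I would fix a positive integer $h$ and study $\mathcal{A}_{\F_{q^h}}$, whose Frobenius endomorphism is $\mathrm{Fr}_{\mathcal{A}}^h$ and whose attached Weil number is $\pi^h$. This $\pi^h$ is again an ordinary Weil $q^h$-number, since $\ord_{\mathfrak{P}}(\pi^h)/\ord_{\mathfrak{P}}(q^h)=\ord_{\mathfrak{P}}(\pi)/\ord_{\mathfrak{P}}(q)$ for all $\mathfrak{P}\in S(p)$. The key computation is the characteristic polynomial of $\mathrm{Fr}_{\mathcal{A}}^h$: its roots are the $h$-th powers of the conjugates of $\pi$, that is the $2^n$ elements $\sigma(\pi^h)$, $\sigma\in G$. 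Because $\Q(\pi^h)=K$ is Galois over $\Q$ of degree $2^n$, these conjugates are pairwise distinct, so the characteristic polynomial is the irreducible minimal polynomial of $\pi^h$. An abelian variety over a finite field is simple exactly when its Frobenius characteristic polynomial is a power of an irreducible polynomial; here it is irreducible, so $\mathcal{A}_{\F_{q^h}}$ is simple. Applying Theorem~\ref{ordinaryAV}(iii) over $\F_{q^h}$ to this simple variety with ordinary Weil number $\pi^h\in K$ then gives $\End^0_{\F_{q^h}}(\mathcal{A})\cong K$.

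To conclude, I would note that $\End^0_{\F_q}(\mathcal{A})\subseteq\End^0_{\F_{q^h}}(\mathcal{A})$ with both isomorphic to $K$, hence of the same $\Q$-dimension $2^n$, so they coincide for every $h$. Since each geometric endomorphism is defined over some $\F_{q^h}$, this gives $\End^0(\mathcal{A})=\bigcup_h\End^0_{\F_{q^h}}(\mathcal{A})=K$. In particular $\End^0(\mathcal{A})$ is a division algebra, so by the criterion recalled in Subsection~\ref{zero} the variety $\mathcal{A}_{\bar{\F}_p}$ is simple, i.e. $\mathcal{A}$ is absolutely simple. Finally, to see that all endomorphisms are defined over $\F_q$, I would invoke the standard description of the $\Gal(\bar{\F}_p/\F_q)$-action on $\End(\mathcal{A})$ (whose fixed ring is $\End_{\F_q}(\mathcal{A})$): its topological generator acts by conjugation by the Frobenius endomorphism $\mathrm{Fr}_{\mathcal{A}}$. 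Since $\End^0(\mathcal{A})=K$ is commutative this conjugation is trivial, so the Galois action is trivial and $\End_{\F_q}(\mathcal{A})=\End(\mathcal{A})$.

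The step I expect to be the crux is the simplicity of $\mathcal{A}_{\F_{q^h}}$ for \emph{every} $h$; this is precisely where the hypothesis $\Q(\pi^h)=K$ for all $h$ is used, and it is what prevents $\End^0(\mathcal{A})$ from growing beyond $K$ (which would both destroy absolute simplicity with commutative endomorphisms and permit geometric endomorphisms not rational over $\F_q$). Everything else is a bookkeeping application of Theorem~\ref{ordinaryAV}(iii) together with the standard Galois action on endomorphisms.
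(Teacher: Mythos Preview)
Your proof is correct and follows the same strategy as the paper's: verify the assertions over $\F_q$ via Theorem~\ref{ordinaryAV}(iii), then base-change to each $\F_{q^h}$ and use $\Q(\pi^h)=K$ to see that $\End^0_{\F_{q^h}}(\mathcal{A})$ never grows beyond $K$. The only differences are cosmetic---you argue simplicity of $\mathcal{A}_{\F_{q^h}}$ by computing the characteristic polynomial explicitly and append a (redundant but harmless) Galois-conjugation argument, whereas the paper simply notes $[\Q[\Fr_{\mathcal{A}_k}]:\Q]=2\dim(\mathcal{A}_k)$ and invokes Tate's formula---so this is essentially the paper's argument.
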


\begin{proof}
Since $\Q(\pi)=K$, we get $i(F)=K$. In particular, number fields $K$ and $F$ are isomorphic. In light of Theorem \ref{ordinaryAV},  $\mathcal{A}$ is an ordinary abelian variety
with commutative endomorphism algebra
$E=F\cong K$. By Theorem 2(c) of \cite[Sect. 3]{T66},
$$\dim(\mathcal{A})=\frac{[E:\Q]}{2}=\frac{[K:\Q]}{2}=2^{n-1}.$$
We are going to prove that $\mathcal{A}$ is absolutely simple and all its endomorphisms are defined over $\F_{q}$. Let $h$ be a positive integer and
$k =\F_{q^{h}}$  a degree $h$ field extension of $\F_{q}$. Let $\mathcal{A}_k=\mathcal{A}\times_{\F_{q}}k$ be the abelian variety over $k$ obtained from $\mathcal{A}$
by the extension of scalars.  There is the natural embedding (inclusion) of $\Q$-algebras
$$\End_{\F_{q}}^{0}(\mathcal{A})\subset \End_k^0(\mathcal{A}_k)$$
such that the Frobenius endomorphism $\Fr_{\mathcal{A}_k}$ coincides with $\Fr_{\mathcal{A}}^h$. In particular,
$$\Q[\Fr_{\mathcal{A}_k}]\subset \Q[\mathrm{Fr}_{\mathcal{A}}]=F.$$
In addition, 
$$i(\Fr_{\mathcal{A}_k})=i\left(\Fr_{\mathcal{A}}^h\right)= i\left(\Fr_{\mathcal{A}}\right)^h=\pi^h.$$
Since $\Q[\pi^h]=K=\Q(\pi)$, we get
$$i(\Q[\Fr_{\mathcal{A}_k}])=K=i(\Q[\mathrm{Fr}_{\mathcal{A}}]).$$ 
Hence,
$\Q[\Fr_{\mathcal{A}_k}]=\Q[\mathrm{Fr}_{\mathcal{A}}]$ is a number field of degree $2\dim(\mathcal{A})=2\dim(\mathcal{A}_k)$.
Applying again Theorem 2(c) of \cite[Sect. 3]{T66} to $\mathcal{A}_k$, we conclude that
$$\End^0(\mathcal{A}_k)=\Q[\Fr_{\mathcal{A}_k}]=\Q[\mathrm{Fr}_{\mathcal{A}}]=\End_{\F_{q}}^{0}(\mathcal{A})$$
for all finite overfields $k$ of $\F_{q}$. This implies that 
$$\End^0(\mathcal{A}_k)=\End_{\F_{q}}^{0}(\mathcal{A}),$$
 i.e.,
all the endomorphisms of $\mathcal{A}$ are defined over $\F_{q}$. In particular, $\mathcal{A}$  is absolutely simple and $\End^0(\mathcal{A}) \cong K$.

\end{proof}

\section{Proofs of main results}
\label{mainProof}
As above, $c=\exp(K)$, a prime $p$ splits completely in $K$ and $q=p^c$.

\begin{proof}[Proof of Theorem \ref{main}]
Let $\Pi: H(p) \to W(q^2,K)$ be as in Theorem \ref{HpOrdinary}.
Let $\mathcal{B}\in H(p)$ and let $\Pi(\mathcal{B})$ be the corresponding ordinary Weil's $q^2$-number in $K$.
In light of  Theorem \ref{HpOrdinary}(v),  $\Q[\Pi(\mathfrak{B})^h]=K$ for all positive integers $h$.
In light of Lemma \ref{HTlemma} applied to $q^2$ and $\Pi(\mathfrak{B})$, the Honda-Tate theory \cite{T66,Honda,T} attaches to $\Pi(\mathcal{B})$ an {\sl absolutely simple} $2^{n-1}$-dimensional abelian variety 
 $\mathcal{A}=A(\mathcal{B})$ over $\F_{q^2}$ (that is defined up to an $\F_{q^2}$-isogeny) 
such that $\End^0(A(\mathcal{B})\cong K$, and all endomorphisms  of $A(\mathcal{B})$ are defined over $\F_{q^2}$.

By Theorem \ref{HpOrdinary}(iv),  if $\mathfrak{B}_1,  \mathfrak{B}_2 \in H(p)$ then the
Weil numbers $\Pi(\mathfrak{B}_1)$ and $\Pi(\mathfrak{B}_2)$ are {\sl equivalent} if and only if
 $ \mathfrak{B}_1,$ and $\mathfrak{B}_2$ belong to the same $G$-orbit. In light of \cite[Theorem 1]{T66}, \cite[p. 84] {Honda}
combined with Lemma \ref{transH}, 
all the $A(\mathfrak{B})$ lie in precisely $2^{2^{n-1}-n}$  isogeny classes of abelian varieties over $\bar{\F}_p$. We also know that each of these varieties
is ordinary, has dimension $2^{n-1}$ and their endomorphism algebras are isomorphic to $K$. 

Now, let us prove that each abelian variety $\mathcal{B}$ over 
$\bar{\F}_p$, whose endomorphism algebra is isomorphic to $K$, is isogenous to one of  $A(\mathfrak{B})$ over $\bar{\F}_p$,

In order to do that, first, notice that since $K$ is a field,    $\mathcal{B}$  is simple over $\bar{\F}_p$.   Second,  $\mathcal{B}$
is defined with all its endomorphisms over a  certain finite field $k=\F_{q^{2h}}$
(where $h$ is a certain positive integer), i.e.,
there is a simple abelian variety $\mathcal{B}_k$ over $k$ such that 
$$\mathcal{B}=\mathcal{B}_k\times_k \bar{\F}_p,  \ \End_k^{0}(\mathcal{B}_k)=\End^0(\mathcal{B}) \cong K.$$
Applying  Theorem 2(c) of \cite[Sect. 3]{T66} to $\mathcal{B}_k$, we get
$$K \cong \End^0(\mathcal{B})= \End_k^0(\mathcal{B}_k)=\Q[\Fr_{\mathcal{B}_k}]$$
where $\Fr_{\mathcal{B}_k}$ is the Frobenius endomorphism of $\mathcal{B}_k$. This gives us a field isomorphism
$\Q[\Fr_{\mathcal{B}_k}] \to K$; let us denote by $\pi_{\mathcal{B}_k}$ the image of $\Fr_{\mathcal{B}_k}$ in $K$.
Clearly, $\Q(\pi_{\mathcal{B}_k})=K$; according to a classical result of Weil \cite{Mumford}, $\pi_{\mathcal{B}_k}$ is a Weil's 
$q^{2h}$-number. By Theorem \ref{ordinaryAV}(i) (applied to $q^{2h}$ instead of $q$),  $\pi_{\mathcal{B}_k}$ is {\sl ordinary},
because $\End_k^0(\mathcal{B}_k) \cong K$ is {\sl commutative}. It follows from  Theorem \ref{HpOrdinary}(iii) that there is $\mathfrak{B} \in H(p)$
such that Weil's numbers $\pi_{\mathcal{B}_k}$  and $\Pi(\mathfrak{B})$ are {\sl equivalent}. This means (thanks to  Theorem 1 of \cite{T66}, 
see also \cite[pp. 83--84]{Honda}) that {\sl absolutely simple} abelian varieties
$\mathcal{B}_k$ and  $A(\mathfrak{B})$ become isogenous over $\bar{\F}_p$. 
It follows that  {\sl absolutely simple} abelian varieties $\mathcal{B}=\mathcal{B}_k\times_k \bar{\F}_p$
and $A(\mathfrak{B})$ are isogenous over $\bar{\F}_p$. 

This proves (i), (ii)(1) and (ii)(2).
It remains to prove (ii)(3).  It suffices to check that for each $\mathcal{B}\in H(p)$ 
there exists an abelian variety $A_0$ that is defined over $\F_q$ with all its endomorphism and such that
 $A(\mathcal{B})$ is isogenous to $A_0$ over $\bar{\F}_p$.

Let $\Pi_0: H(p) \to W(q,K)$ be as in Theorem \ref{HpOrdinary}(vi)
 and $\Pi_0(\mathcal{B})$ be the corresponding ordinary Weil's $q$-number in $K$.
In light of  Theorem \ref{HpOrdinary}(vi)(c),  $\Q[\Pi_0(\mathfrak{B})^h]=K$ for all positive integers $h$.
In light of Lemma \ref{HTlemma} applied to $q$ and $\Pi_0(\mathfrak{B})$, the Honda-Tate theory \cite{T66,Honda,T} attaches to  Weil's $q$-number $\Pi_0(\mathcal{B})$ an {\sl absolutely simple} $2^{n-1}$-dimensional abelian variety 
 $\mathcal{A}_0$ over $\F_{q}$ (that is defined up to an $\F_{q}$-isogeny) 
such that $\End^0(\mathcal{A}_0)\cong K$, and all endomorphisms  of $\mathcal{A}_0$ are defined over $\F_{q}$.

Since  $\Pi_0(\mathcal{B})^2=\Pi(\mathcal{B})$, Weil's numbers $\Pi_0(\mathcal{B})$ and $\Pi(\mathcal{B})$ are {\sl equivalent}.
As above, in light of Theorem 1 of \cite{T66} 
(see also \cite[pp. 83--84]{Honda}), the corresponding {\sl absolutely simple} abelian varieties $\mathcal{A}_0$ and $A(\mathcal{B})$ are isogenous over $\bar{\F}_p$.  
This ends the proof.

\end{proof}

\begin{proof}[Proof of Corollary \ref{cycleP}]
Recall that $r$ is an odd prime and $\zeta_r$ is a primitive $r$th root of unity.
Clearly, $\Q(\zeta_r)$ is a CM field. Hence, its subfield $K$ is either CM or a totally real.
Since $\HH$ has odd order $m$, it does {\sl not} contain the complex conjugation
$\rho: \Q(\zeta_r) \to \Q(\zeta_r),$ because $\rho$ has order $2$. Hence, $\rho$ acts nontrivially on $K=\Q(\zeta_r)^{\HH}=K^{(r)}$,
which implies that $K$ is a CM field. (See also \cite[p. 78]{CH}.) Its degree
$$[K:\Q]=\frac{[\Q(\zeta_r):\Q]}{\#(\HH)}=\frac{m \cdot 2^n}{m}=2^n.$$
We also know (Remark \ref{Kr0}) that every totally positive unit in $K_0$ is a square in $K_0$.

Clearly, $K/\Q$ is ramified at $r$ and unramified at every prime $p \ne r$. Let us find which $p\ne r$ split completely in $K$.
Let  
$$f_p \in \Gal(\Q(\zeta_r)/\Q)=(\Z/r\Z)^{*}$$
be the Frobenius element attached to $p$, which is characterized by the property
$$f_p(\zeta_r)=\zeta_r^p.$$
In other words,
$$f_p=p \bmod \ r\in (\Z/r\Z)^{*}=\Gal(\Q(\zeta_r)/\Q).$$
Clearly, $p$ splits completely in $K$ if and only if $f_p\in \HH$. So,
we need to find when  $f_p$ lies in $\HH$. In order to do it, notice that 
$$\HH=\{\sigma^{2^n}\mid \sigma \in \Gal(\Q(\zeta_r)/\Q)= (\Z/r\Z)^{*}\}.$$
This implies that $f_p$ lies in $\HH$ if and only if $p \bmod \ r$ is a $2^r$th power in $\Z/r\Z=\F_r$.
This ends the proof of  (0).

The remaining assertions (i) and (ii) follow from Theorem \ref{main} combined with (0).

\end{proof}

\begin{proof}[Proof of Corollary \ref{mainFermat}]
In the notation of Corollary \ref{cycleP}, this is the case when $m=1$ and $2^n=r-1$.
By little Fermat's theorem, every nonzero $a \in \Z/r\Z$ satisfies 
$$a^{2^n}=a^{r-1}=1.$$
Now the desired result follows readily from Corollary \ref{cycleP}.

\end{proof}

\end{document}